 \documentclass[a4paper,12pt]{amsart}
\usepackage[margin=2cm]{geometry}
 \usepackage{amsmath,amsthm,amssymb}
 \usepackage{graphicx,color}
 \usepackage[all]{xy}

\theoremstyle{definition}
\newtheorem{thm}{Theorem}[section]
\newtheorem{lem}[thm]{Lemma}
\newtheorem{cor}[thm]{Corollary}
\newtheorem{prop}[thm]{Proposition}
\newtheorem{ques}[thm]{Problem}

\newtheorem{rem}[thm]{Remark}
\newtheorem{ex}[thm]{Example}
\numberwithin{equation}{section}

\newcommand{\Z}{\mathbb{Z}}
\newcommand{\R}{\mathbb{R}}
\newcommand{\C}{\mathbb{C}}

\newcommand{\Hom}{\mathrm{Hom}}

\def\red#1{{\textcolor{black}{#1}}}

\begin{document}
\title{Representing a point and the diagonal as zero loci in flag manifolds}
\author{Shizuo KAJI}
\thanks{This work was partially supported by KAKENHI, Grant-in-Aid for Scientific Research (C) 18K03304.}
\date{}
\subjclass[2010]{Primary 57T20; Secondary 55R25.}
\keywords{flag manifold, diagonal, Chern class}
\address{Institute of Mathematics for Industry, Kyushu University, Japan}
\email{skaji@imi.kyushu-u.ac.jp}


\begin{abstract}
The zero locus of a generic section of a vector bundle over a manifold
defines a submanifold.
A classical problem in geometry asks 
to realise a specified submanifold in this way.
We study two cases; a point in a generalised flag manifold and 
the diagonal in the direct product of two copies of a generalised flag manifold.
These cases are particularly interesting since they are related to
ordinary and equivariant Schubert polynomials respectively.
\end{abstract}

%

\maketitle

\section{Introduction}
Let $N$ be a manifold of dimension $2n$.
Consider a smooth function $f: N\to \C^m$ having $0\in \C^m$ as a regular value.
Then, $M=f^{-1}(0)\subset N$ is a submanifold of codimension $2m$.
Conversely, we can ask if a submanifold $M\subset N$ of codimension $2m$
can be realised in this way, or more generally,
as the zero locus of a generic section of a rank $m$ complex vector bundle $\xi\to N$.
Here by a generic section, we mean it is transversal to the zero section.
We say $M$ is \emph{represented by $\xi$} if such a bundle $\xi$ exists.

The following example tells that even for the simplest case the question is not as trivial as it may appear to be.
\begin{ex}
Consider the representability of a point in $S^2$. 
Identify $S^2=\C P^1$ and let $\gamma^*\to \C P^1$ be the dual of the tautological bundle
\red{
\[
\gamma =\{ ([z_0,z_1],(cz_0,cz_1)) \mid [z_0,z_1]\in \C P^1, z_0z_1\neq0, c\in \C\}.
\]
One of its generic sections is given by the projection $(cz_0,cz_1) \mapsto cz_0$,
whose zero locus is exactly the south-pole $[0,1]\in \C P^1$.}
Since $S^2$ is homogeneous with a transitive $SO(3)$ action,
for any pair of points $x,y\in S^2$, there is an element $g\in SO(3)$ such that
$gx=y$. By choosing $g$ appropriately, we can represent any point in $S^2$ by $g^*(\gamma^*)$.

On the other hand, consider the representability of a point in $S^{2n}$ for $n>2$.
Bott's integrality theorem tells that the top Chern class $c_n$ of any rank $n$ complex vector bundle
 on $S^{2n}$ is divisible by $(n-1)!$ (see, for example, \cite[Proposition 6.1]{almost-complex}).
However, if there is a rank $n$ bundle with a generic section whose zero locus is a point,
the top Chen class of the bundle has to be the generator of $H^{2n}(S^{2n};\Z)$.
Hence, there is no such bundle, and a point in $S^{2n}$ is representable if and only if $n=1$. 
\end{ex}

From now on, all spaces are assumed to be based, and the base points are denoted by $pt$.
The following two submanifolds are particularly interesting (see \cite{PSP} and references therein): 
\begin{enumerate}
\item the base point $\{pt\}\subset X$
\item the diagonal $\red{\Delta(X)=\{(x,x)\mid x\in X\} }\subset X\times X$.
\end{enumerate}
\red{In the language of~\cite{PSP}, if any point in $X$ (resp. the diagonal in $X\times X$) is representable,
 $X$ is said to have property $(P_c)$ (resp. $(D_c)$).}
Note that the choice of the base point does not make any difference \red{when $X$ is connected} since
for any pair of points $x,y\in X$ there exists a diffeomorphism $f:X\to X$ satisfying $f(y)=x$ so that 
the bundle and the section for the representability of the point $x$ are pulled back to represent the point $y$.
Note also that when $\Delta(X)\subset X\times X$ is represented by $\xi$, 
then $\{pt\} \subset X$ is represented by $\iota^*(\xi)$, where
$\iota$ is the inclusion $N \hookrightarrow N\times N$ defined by $\iota(x)=(x,pt)$.

In~\cite{PSP} analogous problems in different settings are considered; 
in an algebraic setting and in topological settings with complex bundles, real bundles, and real oriented bundles.
In this note, we focus on the following topological variant:
\begin{ques}
Let $X$ be a (generalised) flag manifold $G/P$, where $G$ is a complex, \red{connected, simple Lie group and $P$ is one parabolic subgroup.}
Find a rank $\dim_\C(X)$ complex bundle $\xi\to X$ (resp. $\xi\to X\times X$)
with a smooth generic section which vanishes exactly at the base point (resp. along $\Delta(X)$).
\end{ques}

The problem is related to Schubert calculus.
The Poincar\'e dual to the fundamental class of the base point defines a cohomology class, which corresponds to the top Schubert class.
Similarly, the class of the diagonal can be thought of as a certain restriction of the torus equivariant top Schubert class (see \S~\ref{schubert}).
\medskip

\red{
Fulton showed a remarkable result (\cite[Proposition 7.5]{F}) that
the diagonal in $X\times X$ for any type $A$ flag manifold $X=SL(k+1)/P$ with any parabolic subgroup $P$
is representable. 
Note that Fulton's result works in a holomorphic setting and is stronger than our topological setting.
On the other hand, for full (i.e. complete) flag manifolds (when $P=B$ is the Borel subgroup) of other Lie types, 
P. Pragacz and the author showed (\cite[Theorem 17]{kapr})
that 
the base point in $G/B$ (and hence, the diagonal in $G/B\times G/B$) is not representable unless $G$ is of type $A$ or $C$.
Indeed, we show in this note
\begin{thm}[Proposition \ref{prop:point}, Theorem \ref{exceptional}]
The base point (resp. the diagonal) of $G/B$ is representable if and only if $G$ is of type $A$ or $C$.
Moreover, 
the base point of $G/P$ is not representable for any proper parabolic subgroup $P$
when $G$ is of exceptional type.
\end{thm}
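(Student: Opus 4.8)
The plan is to split the statement into three parts --- the positive direction (types $A$ and $C$), the negative direction for $G/B$, and the negative direction for $G/P$ in exceptional type --- after first rephrasing representability in terms of Chern classes. For a closed $2n$-manifold $N$, the base point $\{pt\}\subset N$ is represented by a rank-$n$ complex bundle if and only if there is a rank-$n$ complex bundle $\xi\to N$ with $c_n(\xi)$ a generator of $H^{2n}(N;\Z)\cong\Z$: any section is nowhere zero on the $(2n-1)$-skeleton for dimension reasons, and once $\langle c_n(\xi),[N]\rangle=\pm1$ one cancels pairs of opposite zeros to leave a single non-degenerate one, while the zero locus of a representing section conversely computes $e(\xi)=c_n(\xi)$. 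For the diagonal one works over $X\times X$, where the normal bundle of $\Delta(X)$ is $TX$, seeking a rank-$n$ bundle restricting to $TX$ along $\Delta(X)$ with Euler class the diagonal class; and representability of $\Delta(X)$ implies that of $\{pt\}$ via $\iota(x)=(x,pt)$, as in the introduction. Thus \cite[Theorem~17]{kapr} --- that $\{pt\}\subset G/B$ is not representable unless $G$ is of type $A$ or $C$ --- already gives the ``only if'' halves of the first assertion, for both the point and the diagonal.

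For the ``if'' half I would use that $G/B$ is an iterated projective bundle when $G$ is of type $A$ or $C$: choosing the first (isotropic, in type $C$) line of a complete flag presents $G/B$ as a $\C P^{d_1}$-bundle over the complete flag manifold of the Levi --- of the same type but smaller rank --- so that inductively $G/B=X_k\to X_{k-1}\to\cdots\to X_0=pt$, with each $X_i\to X_{i-1}$ a $\C P^{d_i}$-bundle and $\sum_i d_i=\dim_\C G/B=:n$. Writing $L_i\to G/B$ for the pull-back of the relative $\mathcal{O}(1)$ at the $i$-th stage, the bundle $\bigoplus_i L_i^{\oplus d_i}$ has rank $n$ and top Chern class $\prod_i c_1(L_i)^{d_i}$, a generator of $H^{2n}(G/B;\Z)$ by Leray--Hirsch applied stage by stage; thus $\{pt\}$ is representable. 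For the diagonal I would induct along the same tower: if $\Delta(B)$ is represented by $\eta\to B\times B$ and $X=\mathbb{P}(E)\to B$ has universal quotient $Q_E$, then on $X\times X$ the bundle $(\pi\times\pi)^*\eta\oplus\bigl(p^*\mathcal{O}_E(1)\otimes q^*Q_E\bigr)$ works --- a section of the first summand cuts out $X\times_B X$ transversally, and on $X\times_B X$ the composition $p^*\mathcal{O}_E(-1)\hookrightarrow\pi^*E\twoheadrightarrow q^*Q_E$ is a section of the second summand vanishing exactly and transversally along $\Delta(X)$ --- once the latter section is extended smoothly over $X\times X$ and the combined differential is checked to be surjective along $\Delta(X)$. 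Starting from $B=pt$ this shows $\Delta(G/B)$ representable in type $A$, recovering \cite[Proposition~7.5]{F} topologically, and in type $C$; the point case then follows by $\iota^*$.

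For the ``Moreover'' part I would rule out, for $G$ of exceptional type and $P$ proper, a rank-$n$ bundle $\xi\to G/P$ with $c_n(\xi)$ a generator of $H^{2n}(G/P;\Z)$. Sums of line bundles are immediately insufficient --- when $P$ is maximal, $\mathrm{Pic}(G/P)$ has rank one and their top Chern class is always a multiple of $\deg(G/P)>1$ --- so general bundles must be excluded. Here I would argue in $K$-theory in the spirit of Bott's theorem recalled in the Example: a section of $\xi$ vanishing exactly at $pt$ gives, via the Koszul complex, the identity $[\mathcal{O}_{pt}]=\sum_i(-1)^i[\Lambda^i\xi^\vee]$ in $K^0(G/P)$, so that the fixed generator $[\mathcal{O}_{pt}]$ of the bottom filtration step $F^{2n}K^0(G/P)\cong\Z$ lies in the image of $\lambda_{-1}$ on rank-$n$ bundles; testing this against the Chern character, the Adams operations, or the Riemann--Roch integers $\int_{G/P}\mathrm{ch}(\xi^\vee\otimes L)\,\mathrm{td}(G/P)$ --- computed from the known ring structures of $H^*(G/P;\Z)$ and $K^0(G/P)$ --- one finds $c_n(\xi)$ forced to be divisible by a prime. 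Since there are only finitely many exceptional pairs $(G,P)$, the maximal parabolics (the Picard-rank-one cases) being the essential ones, this is a finite verification.

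The characteristic-class reduction and the iterated $\C P$-bundle constructions are routine; the main obstacle is the exceptional $G/P$ case, where one must exclude \emph{every} rank-$n$ bundle --- not merely sums of line bundles --- which requires genuinely $K$-theoretic control of the denominators of the Chern character on $K^0(G/P)$, and organising this uniformly across the exceptional types rather than as an ad hoc list of computations is the delicate point.
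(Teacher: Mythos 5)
Your treatment of the first assertion is essentially sound and tracks the paper's route: you quote \cite[Theorem~17]{kapr} for the ``only if'', and for the ``if'' you exhibit $G/B$ in types $A$ and $C$ as the total space of an iterated $\C P$-bundle and induct, with the inductive step being exactly Fulton's diagonal construction for one projectivisation (the paper's Proposition~\ref{bundle} in the special case of a $\C P$-bundle). One small inaccuracy: in type $C$ (and $A$) the stages of the tower are \emph{partial} flag manifolds $F^k_{1,\ldots,k'}\to F^k_{1,\ldots,k'-1}$ of the same rank, not ``complete flag manifolds of the Levi of smaller rank''; the latter appears as a \emph{fibre} of $G/B\to \C P^{2k-1}$, not as a base, and is not connected to $G/B$ by a $\C P$-bundle. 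The conclusion is unaffected since the induction only needs the genuine tower, but the geometric picture you describe is off.

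The real gap is the ``Moreover'' clause, which you do not in fact prove. You propose using the Koszul resolution $[\mathcal{O}_{pt}]=\sum_i(-1)^i[\Lambda^i\xi^\vee]$ together with Chern character / Adams / Riemann--Roch integrality on $K^0(G/P)$, then ``a finite verification''---and you explicitly flag the difficulty of making this uniform. That difficulty is not incidental: the paper does not take this route at all. Its key input is that when $K$ is simply connected the Atiyah--Hirzebruch map $H^*(BT)\to K^0(K/T)$ is surjective, and restricting to $W(H)$-invariants gives a surjection onto $K^0(K/H)$; consequently \emph{every} complex bundle on $K/H$ stably splits into line bundles after pullback to $K/T$, and all its Chern classes lie in the image of $c^*:H^*(BT)^{W(H)}\to H^*(K/H)$. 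The obstruction is then the torsion index $\tau_{K/H}$ (the smallest multiple of the top class $u_{2n}$ hitting that image), for which one has the multiplicativity $\tau_{K/T}\le\tau_{H/T}\cdot\tau_{K/H}$; comparing the known values of $\tau_{K/T}$ (Totaro) with the possible $\tau_{H/T}$ for $H$ a proper Levi subgraph yields $\tau_{K/H}>1$ for every proper parabolic in every exceptional type at once, with no case-by-case Chern-character integrality computation. Your sketch does not supply the step that promotes the line-bundle observation (``$\mathrm{Pic}$ rank one $\Rightarrow$ multiples of $\deg$'') to \emph{all} rank-$n$ bundles---that is precisely what Atiyah--Hirzebruch surjectivity buys---nor does it organise the exceptional $P$ uniformly, so the proof of the second assertion is missing.
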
}
Thus, the remaining cases are those of flag manifolds of type $B$, $C$, and $D$.
In \cite[Theorem 12]{PSP}, non representability of the diagonal is shown for the odd complex quadrics, 
which are partial flag manifolds of type $B$.
Naturally, we may ask if there is any flag manifold where the base point is representable but the diagonal is not.
The main result of this note is to give such an example. 
\red{
Namely, we show 
\begin{thm}[Theorem \ref{main-thm}]
Let $Lag_\omega(\C^{2k})$ be the Lagrangian Grassmannian of maximal isotropic subspaces 
in the complex symplectic vector space $\C^{2k}$ with a symplectic form $\omega$.
The base point in $Lag_\omega(\C^{2k})$ is representable for any $k$,
but its diagonal is not when $k\equiv 2 \mod 4$.
\end{thm}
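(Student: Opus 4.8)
Write $X=Lag_\omega(\C^{2k})$, $n=\dim_\C X=\binom{k+1}{2}$, let $S\subset\underline{\C^{2k}}$ be the tautological Lagrangian subbundle, and recall that $\C^{2k}/S\cong S^*$ and $TX\cong\mathrm{Sym}^2 S^*$. For the base point the plan is to produce the rank‑$n$ bundle $\xi=S^*\oplus\Lambda^2 S^*$ (its rank is $k+\binom{k}{2}=n$) and to compute its top Chern class. Writing $x_1,\dots,x_k$ for the Chern roots of $S^*$, the Chern roots of $\mathrm{Sym}^2 S^*$ are the $x_i+x_j$ with $i\le j$, so $e(TX)=\prod_{i\le j}(x_i+x_j)=2^k\bigl(\prod_i x_i\bigr)\bigl(\prod_{i<j}(x_i+x_j)\bigr)$; comparing with $e(TX)=\chi(X)[\mathrm{pt}]=2^k[\mathrm{pt}]$ and cancelling $2^k$ — legitimate since $H^*(X;\Z)$ is torsion‑free — gives $c_{\mathrm{top}}(\xi)=c_k(S^*)\,c_{\binom{k}{2}}(\Lambda^2 S^*)=\bigl(\prod_i x_i\bigr)\bigl(\prod_{i<j}(x_i+x_j)\bigr)=[\mathrm{pt}]$. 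Since $X$ is connected, a rank‑$n$ complex bundle on $X^{2n}$ with top Chern class the point class represents a point: a generic smooth section has a finite signed zero set of total count $1$, and zeros of opposite sign are cancelled in pairs along embedded arcs. Alternatively I would avoid the cancellation step by induction: $S^*$ is globally generated, a generic section vanishes transversally along the Schubert subvariety $\{L:v\in L\}\cong Lag_\omega(\C^{2k-2})$, onto which $\Lambda^2 S^*$ restricts as the analogous bundle for $Lag_\omega(\C^{2k-2})$, and with base case $\C P^1$ with $\mathcal O(1)$ one builds a section of $\xi$ with a single zero after a combined transversality check.

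For the diagonal I would argue by contradiction. If $\Delta=\Delta(X)\subset X\times X$ is cut out transversally by a section of a rank‑$n$ bundle $\xi$, then transversality forces $\xi|_{\Delta}\cong\nu_{\Delta/X\times X}\cong TX$, whence $\Delta^*c_\bullet(\xi)=c_\bullet(\mathrm{Sym}^2 S^*)$; moreover $c_n(\xi)=[\Delta]^\vee$, and the Koszul complex of the section yields $[\mathcal O_\Delta]=\lambda_{-1}(\xi^\vee)=\sum_i(-1)^i\Lambda^i\xi^\vee$ in $K^0(X\times X)$. All the ``target'' data are explicitly known: $H^*(X;\Z)$ is torsion‑free with Schubert basis $\{\sigma_\lambda\}$ indexed by strict partitions $\lambda$ inside the staircase $(k,k-1,\dots,1)$, whose intersection‑dual basis is the complementary family $\{\sigma_{\lambda^c}\}$, so $[\Delta]^\vee=\sum_\lambda\sigma_\lambda\otimes\sigma_{\lambda^c}$; reducing mod $2$, in the presentation $H^*(X;\F_2)\cong\Lambda_{\F_2}(\sigma_1,\dots,\sigma_k)$ — which follows from the Lagrangian relations forcing $\sigma_i^2\equiv0$ — this becomes $[\Delta]^\vee\equiv\prod_{i=1}^k(\sigma_i\otimes1+1\otimes\sigma_i)$. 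Likewise $K^0(X\times X)\cong K^0(X)\otimes K^0(X)$ is free and $[\mathcal O_\Delta]$ is the $K$‑theoretic Gysin image of $1$, computable from dual Schubert bases.

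The crux — and the step I expect to be the main obstacle — is to convert the incompatibility ``no rank‑$n$ bundle $\xi$ can have both $\xi|_\Delta\cong TX$ and $c_n(\xi)=[\Delta]^\vee$'' into a numerical statement and to verify it precisely when $k\equiv2\bmod4$. The route I would take is the obstruction‑theoretic one of \cite{kapr}: whether a prescribed top class is realised by a genuine rank‑$n$ bundle, as opposed to merely by a stable class (of which there is a whole coset under $\ker(\Delta^*:K^0(X\times X)\to K^0(X))$), is governed — via Bott's computation $\pi_{2n}(U(n))\cong\Z/n!$ — by a secondary characteristic class in $H^{2n+2}(X\times X;-)$ whose $2$‑primary part carries the information; equivalently one pushes $[\mathcal O_\Delta]=\lambda_{-1}(\xi^\vee)$ through Adams operations and the Chern character and reads off a congruence that the known value of $[\mathcal O_\Delta]$ must satisfy. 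What then remains is a genuinely computational $2$‑adic analysis: showing the resulting obstruction is nonzero exactly when $\nu_2(k)=1$, the arithmetic input being the parity of $n=\tfrac12 k(k+1)$ together with the Wu‑type binomial coefficients occurring in $c_\bullet(\mathrm{Sym}^2 S^*)$. As a sanity check, $k=2$ is the case $X=Lag_\omega(\C^4)\cong Q^3$, the three‑dimensional quadric, for which non‑representability of the diagonal is \cite[Theorem 12]{PSP}.
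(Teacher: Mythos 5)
Your proof of part~(1) is correct and essentially identical to the paper's: the bundle $\xi=S^*\oplus\Lambda^2 S^*$ is exactly the paper's bundle (written there as $\bigoplus_i L_{x_i}\oplus\bigoplus_{i<j}L_{x_i+x_j}$, descended from $Sp(k)/T$ to $Sp(k)/U(k)$), and you identify its top Chern class with the point class by the same comparison with $e(TX)=2^k u_{2n}=\chi(X)u_{2n}$; the zero-cancellation step is the paper's Lemma~\ref{lem:point}.

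Part~(2) has a genuine gap. You correctly record the necessary conditions on a hypothetical $\xi'$ representing $\Delta(X)$ (namely $\Delta^*\xi'\cong TX\cong\mathrm{Sym}^2S^*$ and $c_n(\xi')=[\Delta]$), and you correctly guess that the obstruction is a mod-$2$ phenomenon governed by Wu-type formulas and the parity of $n=\tfrac12 k(k+1)$, but you then defer the actual argument to an unexecuted obstruction-theoretic and Adams-operations scheme. The paper's argument needs none of that machinery. The two ideas you are missing are: (a) a \emph{spin lemma} (Lemma~\ref{lem:spin}): when $k\equiv 2\pmod 4$ (so that $n$ is odd), any rank-$n$ bundle $\xi$ on $X$ with $c_n(\xi)=\pm u_{2n}$ has $c_1(\xi)\equiv 0\pmod 2$; the proof applies Wu's formula $Sq^2(c_i)=c_1c_i+(2i-1)(i-1)c_{i+1}$ to $c_{n-1}(\xi)$ and compares with $Sq^2(u_{2n-2})=(k-1)u_{2n}=u_{2n}$, using $c_i^2\equiv 0$ in $H^*(X;\F_2)$; and (b) the \emph{factor-inclusion trick}: pulling $\xi'$ back along $i_1,i_2\colon X\hookrightarrow X\times X$ gives two bundles each representing the base point, hence each spin by~(a), and since $H^2(X\times X)\cong H^2(X)\oplus H^2(X)$ one concludes $c_1(\Delta^*\xi')=c_1(i_1^*\xi')+c_1(i_2^*\xi')\equiv 0\pmod 2$, contradicting $c_1(TX)=(k+1)u_1$ with $k$ even. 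Without (a) and (b) the constraints you assemble do not close into a contradiction, and the ``$2$-adic analysis'' you propose via $\pi_{2n}(U(n))\cong\Z/n!$ is neither set up nor carried out.
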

}
We also see that the base point is not representable for many type $B$ and $D$ partial flag manifolds
(Proposition \ref{orthogonal-Grassmann} and Remark \ref{rem:final}).

Throughout this note, $H^*(X)$ stands for the singular cohomology $X$ with integer coefficients.
Denote by $M\subset N$ a closed oriented submanifold $M$ of codimension $2m$ embedded in a closed oriented $2n$-manifold $N$.
The cohomology class which is Poincar\'e dual to the fundamental class of $M$ is denoted by $[M]\in H^{2m}(N)$.

\subsection*{Acknowledgement}
The author would like to thank 
the referee for his/her careful reading and valuable comments.

\section{Criteria for representability}
We begin with trivial but useful criteria for the representability of submanifolds in general.
\begin{prop}\label{necessary}
Let $N$ be a closed oriented manifold of dimension $2n$.
Assume that $\xi\to N$ represents a submanifold $M\subset N$ of codimension $2m$.
\begin{enumerate}
\item The top Chern class $c_m(\xi)\in H^{2m}(N)$ is equal to the class $[M]$.
\item The restriction $\xi|_M$ is isomorphic to the normal bundle $\nu(M)$ of $M\subset N$.
\end{enumerate}
\end{prop}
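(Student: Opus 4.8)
The plan is to reduce both assertions to the local normal form of a transversally vanishing section, together with the classical identification of the top Chern class with the Euler class.

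Recall that "$\xi$ represents $M$" means there is a smooth section $s$ of $\xi$ transversal to the zero section with $s^{-1}(0)=M$; transversality already guarantees that $M$ is a closed submanifold of codimension $2m$. For part (2) I would argue locally. Fix $x\in M$ and a trivialising neighbourhood $U\ni x$, so that $s$ becomes a smooth map $s_U\colon U\to\C^m$ having $0$ as a regular value with $M\cap U=s_U^{-1}(0)$. Transversality says $d(s_U)_x\colon T_xN\to\C^m$ is onto with kernel $T_xM$, so it descends to a $\C$-linear isomorphism $\nu(M)_x=T_xN/T_xM\xrightarrow{\ \sim\ }\C^m=\xi_x$. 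This is independent of the trivialisation: a change of trivialisation is a map $g\colon U\to GL_m(\C)$, and from $d(gs)=(dg)\,s+g\,(ds)$ together with $s|_M=0$ one gets $d(gs)|_M=g\cdot(ds)|_M$, which is exactly how the transition functions of $\xi|_M$ act. Patching over a cover of $M$ therefore yields a $\C$-linear bundle isomorphism $\nu(M)\cong\xi|_M$.

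For part (1) I would invoke two standard facts from the theory of characteristic classes: first, that the top Chern class $c_m(\xi)$ equals the Euler class of the underlying oriented rank-$2m$ real bundle; second, that this Euler class is Poincar\'e dual to the zero locus of any section transversal to the zero section, oriented by the convention coming from the bundle orientation (see e.g. Milnor--Stasheff or Bott--Tu). Combining these with $s^{-1}(0)=M$ gives $c_m(\xi)=[M]$ in $H^{2m}(N)$, where $M$ carries the orientation determined by the complex structure on $\nu(M)\cong\xi|_M$ from part (2) together with the orientation of $N$ via $\nu(M)\oplus TM\cong TN$.

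The one point that needs care, rather than computation, is the orientation bookkeeping in (1): one must check that the complex orientation of $\xi$, the given orientation of $N$, and the induced orientation of $M$ are mutually consistent so that the Euler class is $+[M]$ and not $-[M]$. In every application in this note $N$ and $M$ are complex (flag) manifolds with their canonical orientations, so this compatibility is automatic and no sign issue arises; the rest is the routine transversal-section computation, which I would only sketch.
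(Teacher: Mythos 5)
The paper states this proposition as a "trivial but useful" criterion and supplies no proof at all, so there is nothing to compare against directly; your account is a correct and complete rendering of the standard argument via the local normal form of a transversal section (for (2)) and the Euler-class interpretation of the top Chern class (for (1)). The only slight imprecision is calling the map $T_xN/T_xM\to\xi_x$ a ``$\C$-linear isomorphism'' before $\nu(M)_x$ has been given a complex structure — what you actually produce is a real isomorphism which then \emph{transports} the complex structure of $\xi_x$ to $\nu(M)_x$; this is harmless and does not affect the argument.
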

Note that the converse to (1) does not hold; 
the equality of classes \red{$c_m(\xi)=[M]$} does not necessarily mean that we can find a generic section whose zero locus is exactly $M$.
However, when $M$ is a point, we can pair zeros with opposite orientations of any generic section to cancel out.
This means:
\begin{lem}\label{lem:point}
The base point in a closed oriented $2n$-manifold $N$ is representable if and only if there is a rank $n$ complex bundle
whose top Chern class is the generator of $H^{2n}(N)$.
\end{lem}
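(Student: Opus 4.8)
The plan is to treat the two implications separately. The ``only if'' direction is immediate from Proposition~\ref{necessary}(1): a rank $n$ bundle representing the base point realises it as a codimension $2n$ submanifold, so its top Chern class equals $[\{pt\}]$, which is a generator of $H^{2n}(N)\cong\Z$ (here $N$ is tacitly assumed connected). For the converse, the idea is to upgrade the cohomological hypothesis to a geometric one by repeatedly cancelling zeros of opposite sign, exactly as the remark preceding the statement suggests.

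Concretely, I would start from an arbitrary smooth section $s$ of $\xi$ transverse to the zero section. Its zero set $Z(s)$ is then a finite set, and since $\xi$ carries its complex orientation and $N$ is oriented, each $z\in Z(s)$ has a local index $\pm 1$ with $\sum_{z\in Z(s)}\mathrm{ind}(z)=\langle e(\xi),[N]\rangle=\langle c_n(\xi),[N]\rangle=\pm1$. If $Z(s)$ has more than one point it must contain two points $p_+,p_-$ of opposite index, since otherwise the signed total would have absolute value $|Z(s)|\ge 2$. Using that $N$ is connected of dimension $2n\ge 2$, I would join $p_+$ to $p_-$ by a smoothly embedded arc $\gamma$ disjoint from $Z(s)\setminus\{p_+,p_-\}$, enlarge it to a closed ball $B$ with $B\cap Z(s)=\{p_+,p_-\}$ over which $\xi$ trivialises, and note that in a trivialisation $s|_{\partial B}\colon S^{2n-1}\to\C^n\setminus\{0\}\simeq S^{2n-1}$ has degree equal to the signed count of zeros in $\mathrm{int}(B)$, namely $\mathrm{ind}(p_+)+\mathrm{ind}(p_-)=0$. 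A degree-zero self-map of a sphere is null-homotopic, hence extends to a map $B\to\C^n\setminus\{0\}$; smoothing this extension and interpolating on a collar of $\partial B$ produces a transverse section that equals $s$ outside $\mathrm{int}(B)$ and has zero set $Z(s)\setminus\{p_+,p_-\}$.

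Iterating, after finitely many steps I reach a transverse section of $\xi$ whose zero set is a single point $x\in N$. Finally, since $N$ is connected there is a diffeomorphism $f\colon N\to N$ with $f(pt)=x$, and pulling $\xi$ and the section back along $f$ gives a rank $n$ bundle over $N$ with a generic section vanishing exactly at the base point; alternatively one invokes directly the observation from the introduction that representability of a point does not depend on the chosen point. I expect the cancellation step to be the only real obstacle: arranging the arc and the ball $B$ with all the listed properties at once, and performing the surgery on $s$ so that it stays smooth and transverse and creates no new zeros. The rest is elementary, once the identification of $\langle c_n(\xi),[N]\rangle$ with the signed zero count is in hand.
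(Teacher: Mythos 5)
Your proof is correct and is precisely the argument the paper sketches: the text before the lemma says only ``when $M$ is a point, we can pair zeros with opposite orientations of any generic section to cancel out,'' and your write-up supplies the details of that cancellation (opposite-index zeros exist because the signed count is $\pm1$, a ball around a connecting arc gives a boundary map of degree zero, which extends over the ball without zeros, and iteration terminates since $|Z(s)|$ is odd). Same approach, fully fleshed out; nothing to add or correct.
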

The complex $K$-theory $K^0(N)$ can be identified with the set of stable equivalence classes of vector bundles over $N$, where
\[
 \xi_1 \sim \xi_2 \Leftrightarrow \xi_1 \oplus \C^l \simeq \xi_2 \oplus \C^l \text{ for some } l\ge 0.
\]
The Chern class $c_m(\xi)\in H^{2m}(N)$ of a bundle $\xi\to N$ depends only on the stable equivalence class of $\xi$ in $K^0(N)$.
Therefore, if for a complete set of representatives of $K^0(N)$ there is no bundle whose $m$-th Chern class is equal to
the class $[M]\in H^{2m}(N)$, we can conclude that $M$ is not representable.
 
\section{Flag manifolds}\label{schubert}
From now on, we focus on flag manifolds $G/P$ with the base point taken to be the identity coset $eP$.
We assume $G$ is a complex, connected, simple Lie group with a fixed Borel subgroup $B$ containing a maximal compact torus $T\subset B$, and its Weyl group is denoted by $W(G)$.
A parabolic subgroup $P$ is a \red{closed} subgroup of $G$ containing $B$.
Parabolic subgroups are in one-to-one correspondence with the subgraphs of the Dynkin diagram of $G$.
Denote by $K$ the maximal compact subgroup of $G$ containing $T$ and by $H$ its subgroup $P\cap K$.
We have a diffeomorphism $K/H \simeq G/P$ by the Iwasawa decomposition, and in particular, $K/T\simeq G/B$.
We use notations $G/P$ and $K/H$ interchangeably.
We also have a diffeomorphism $K/H\simeq \widetilde{K}/p^{-1}(H)$, where $p:\widetilde{K}\to K$ is the universal covering.
So we can assume $K$ is simply-connected if necessary.

The universal flag bundle is denoted by
$K/T \overset{c}{\hookrightarrow} BT \to BK$, where $BK$ is the classifying space of $K$.
More generally, we have the universal partial flag bundle $K/H\hookrightarrow BH\to BK$.
We say a bundle $K/H\to E \to X$ is a flag bundle if it is a pullback of the universal (partial) flag bundle
via a map $X\to BK$.
The Atiyah-Hirzebruch homomorphism $H^*(BT)\to K^0(K/T)$ is defined by 
 assigning to a character $\lambda\in \Hom(T,\C^*)\simeq H^2(BT)$ the line bundle $L_\lambda := K\times_T \C_\lambda$ over $K/T$
 and extending multiplicatively. Here denoted by $\C_\lambda$ is the space $\C$ acted by $T$ via $\lambda$.
 This map is known to be surjective when $K$ is simply-connected  (cf.~\cite{KK}).

\smallskip
We first note how the representability of the base point and the diagonal is related to Schubert polynomials.
One way to look at Schubert polynomials \cite{BGG} is that they are elements in $H^*(BT)$ which 
pullback via $c:K/T \to BT$ to the classes of Schubert varieties in $K/T$.
In other words, they are polynomials in the first Chern classes of line bundles on $K/T$ representing the Schubert classes.
The top Schubert polynomial represents the class of the base point and it is known by \cite{BGG}
that it ``produces'' all the other Schubert polynomials when applied the divided difference operators.
So in a sense, the top Schubert polynomial carries the information of the whole $H^*(K/T)$.
This is why we are interested in representing the base point.
A similar story goes for the (Borel) $T$-equivariant cohomology $H^*_T(K/T)$, the top double Schubert polynomial,
and the diagonal, as is explained below.

Let $EK$ be the universal $K$-space; that is, $EK$ is contractible on which $K$ acts freely.
\red{Then, the classifying spaces are taken to be $BK=EK/K$ and $BT=EK/T$. 
The Borel construction $K/T$ is defined to be $EK\times_T K/T$, where
$[x,gT]=[xt^{-1},tgT]\in EK\times_T K/T$ for $t\in T$.
Consider the following commutative diagram:
\[
\xymatrix{
 & K/T \ar@{=}[r] \ar[d] & K/T \ar[d]^c \\
K/T \ar[r] \ar@{=}[d] & EK\times_T K/T \ar[r]^(.65){p_2} \ar[d]^{p_1} & BT \ar[d] \\
K/T \ar[r]^c & BT \ar[r] & BK,
}
\]
where the lower-right squares is a pullback, $p_1([x,gT])=[x]$, and $p_2([x,gT])=[xg]$.
We have the sequence of maps
$
K/T \times K/T \xrightarrow{i} EK\times_T K/T \xrightarrow{p} BT\times BT,
$
where $i(g_1T,g_2T)=[pt\cdot g_2,g_2^{-1}g_1T]$ and $p=(p_1,p_2)$. }
The class of the equivariant point $EK\times_T eT/T$ pulls back via $i$ to the class of the diagonal in $K/T\times K/T$.
The class of $EK\times_T eT/T$ corresponds to the class of the top Schubert variety in the equivariant cohomology,
which in turn corresponds to the top double Schubert polynomial.

Let us look at the concrete example of $K=U(k)$. Note that $U(k)/T\simeq SU(k)/T'$ with $T'=T\cap SU(k)$.
Although $U(k)$ is not simple, we consider $U(k)$ for convenience.
Lascoux and Sch\"utzenberger's top double Schubert polynomial~\cite{LS} 
\[
\mathfrak{S}_{w_0}(x,y)=\prod_{1\le i<j\le k}(x_i-y_j)
\]
can be considered as an element in 
$H^*(BT\times BT)\simeq H^*(BT)\otimes H^*(BT) \red{\simeq \Z[x_1,\ldots,x_k]\otimes \Z[y_1,\ldots,y_k]}$,
which pulls back via $p$ to the class of the equivariant point $EK\times_T eT/T$ in the equivariant cohomology
$H^*(EK\times_T K/T)= H^*_T(K/T)$.
This class further pulls back via $i$ to the class of the diagonal in $H^*(K/T\times K/T)$.
For a character $\lambda\in H^2(BT)$, let $\hat{L}_\lambda$ be the line bundle $ET\times_T \C_\lambda\to BT$
\red{such that $c_1(\hat{L}_\lambda)=\lambda$.}
As $\mathfrak{S}_{w_0}(x,y)$ is a product of linear terms, we can define the 
rank $\dim_\C(K/T)=k(k-1)/2$ bundle
\[
\xi = \bigoplus_{1\le i<j\le k} \hat{L}_{x_i} \hat{\otimes} \hat{L}^*_{y_{j}} \to BT\times BT
\]
such that its top Chern class is equal to $\mathfrak{S}_{w_0}(x,y)$.
We have $c_{k(k-1)/2}(i^*p^*(\xi))=[\Delta(K/T)] \in H^{k(k-1)}(K/T\times K/T)$.

Similarly for $K=Sp(k)$, consider the rank $\dim_\C(K/T)=k^2$ bundle 
\[
 \xi=\bigoplus_{1\le i \le j\le k} \hat{L}_{x_i}\hat{\otimes} \hat{L}_{y_j}
 \bigoplus_{1\le i < j\le k}\hat{L}_{x_i}\hat{\otimes}\hat{L}^*_{y_j} \to BT\times BT.
\]
Then, the top Chern class of $p^*(\xi)$ is the equivariant top Schubert class~\cite[\S 8]{K-hiller}.
We have $c_{k^2}(i^*p^*(\xi))=[\Delta(K/T)] \in H^{2k^2}(K/T\times K/T)$.
This means that there is a generic section $s$ of $\xi$ such that $[Z(s)]=[\Delta(K/T)]$ but 
\red{this does not imply the existence of $s$ such that $Z(s)$ is exactly $\Delta(K/T)$.}
We will show that the diagonal of $Sp(k)/T$ is actually representable.

For this, we recall the following slight generalisation of Fulton's result (\cite[Proposition 7.5]{F}) in the current smooth setting.
\begin{prop}[{\cite[Theorem 14]{kapr}}]\label{bundle}
If a point in (resp. the diagonal of) $X$ is representable, 
then so is any point in (resp. the diagonal of) the total space $E$ of any flag bundle of type $A$:
\[
SL(k)/P \hookrightarrow E \xrightarrow{p} X,
\]
where $P$ is any parabolic subgroup of $SL(k)$.
\end{prop}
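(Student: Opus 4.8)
The plan is to reduce the assertion to the case of a single Grassmann bundle and then, on each fibre, to cut out the base point (resp.\ the diagonal) by a bundle built from tautological subbundles, the delicate point being the global extension of the fibrewise cutting section. To begin, note that by the Iwasawa decomposition and the definition of a flag bundle a type $A$ flag bundle is associated to a rank $k$ complex vector bundle $V\to X$, so $E$ is the bundle whose fibre over $x\in X$ is the variety of flags of subspaces of fixed dimensions $d_1<\cdots<d_r$ (determined by $P$) in $V_x$. Such an $E$ is an iterated Grassmann bundle
\[
E=E_r\longrightarrow E_{r-1}\longrightarrow\cdots\longrightarrow E_0=X,
\]
obtained by peeling off one subspace of the flag at a time, with $E_1=\mathrm{Gr}(d_r,V)\to X$ and each $E_j\to E_{j-1}$ a Grassmann bundle of subspaces of a tautological subbundle on $E_{j-1}$. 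By induction along this tower it therefore suffices to prove: \emph{if a point in $X$ (resp.\ the diagonal of $X$) is representable and $W\to X$ is a complex vector bundle of rank $m$, then a point in $\mathrm{Gr}(d,W)$ (resp.\ the diagonal of $\mathrm{Gr}(d,W)$) is representable.}

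For the point version, write $E=\mathrm{Gr}(d,W)$ with projection $p\colon E\to X$, and let $\eta\to X$ of rank $\dim_\C X$ carry a generic section $s$ with $Z(s)=\{pt\}$; then $ds_{pt}$ is an isomorphism. Since $p$ is a submersion, $p^*s$ is transversal to the zero section of $p^*\eta$ and $Z(p^*s)$ is exactly the fibre $Z:=p^{-1}(pt)=\mathrm{Gr}(d,W_{pt})$. The base point $e_0\in Z$ is a $d$-plane $F\subset W_{pt}$, and with $\mathcal{S}\subset p^*W$ the tautological subbundle, the section $\sigma_0$ of $\Hom(\mathcal{S}|_Z,\,W_{pt}/F)$ (the target being the trivial bundle over $Z$ with that fibre) given by the composition $\mathcal{S}|_Z\hookrightarrow W_{pt}\twoheadrightarrow W_{pt}/F$ vanishes precisely where $\mathcal{S}|_Z\subseteq F$, i.e.\ only at $e_0$; moreover this bundle has rank $d(m-d)=\dim_\C Z$ and $\sigma_0$ is transversal at $e_0$. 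Now set $\zeta:=\Hom(\mathcal{S},\C^{m-d})\to E$, of rank $\dim_\C E-\dim_\C X$, so $\zeta|_Z\cong\Hom(\mathcal{S}|_Z,\,W_{pt}/F)$, and extend $\sigma_0$ to a global section $\sigma$ of $\zeta$ (pull it back along the projection of a tubular neighbourhood of $Z$ and multiply by a bump function equal to $1$ near $Z$). Then $p^*\eta\oplus\zeta$ has rank $\dim_\C E$, its section $(p^*s,\sigma)$ has zero locus $Z(p^*s)\cap Z(\sigma)=Z\cap Z(\sigma)=Z(\sigma_0)=\{e_0\}$, and transversality at $e_0$ follows from a block-triangular computation of the differential, using that $p^*s$ is transversal along $Z$ while $\sigma_0$ is transversal within $Z$.

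The diagonal version is parallel, with $X$ replaced by $X\times X$. If $\eta'\to X\times X$ carries a generic section $s'$ with $Z(s')=\Delta(X)$, then $(p\times p)^*s'$ is transversal and its zero locus is $\widehat E:=E\times_X E=(p\times p)^{-1}(\Delta(X))\subset E\times E$. Regarding $\widehat E$ as the Grassmann bundle $\mathrm{Gr}(d,p^*W)\to E$ via the first projection, its tautological subbundle is $\mathrm{pr}_2^*\mathcal{S}|_{\widehat E}$ while $\mathrm{pr}_1^*\mathcal{S}|_{\widehat E}$ is a natural reference subbundle of the ambient bundle, and $\Delta(E)\subset\widehat E$ is exactly the locus where these two coincide. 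It is cut out transversally on $\widehat E$, in the expected codimension $d(m-d)$, by the composition $\mathrm{pr}_2^*\mathcal{S}\hookrightarrow \mathrm{pr}_1^*p^*W\twoheadrightarrow \mathrm{pr}_1^*(p^*W/\mathcal{S})$, a section of $\Hom\big(\mathrm{pr}_2^*\mathcal{S},\,\mathrm{pr}_1^*(p^*W/\mathcal{S})\big)$, which is defined on all of $E\times E$. Extending this section from $\widehat E$ to $E\times E$ and combining it with $(p\times p)^*s'$ exactly as in the point case represents $\Delta(E)$, and induction along the tower then yields the proposition.

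I expect the main obstacle to be the extension of the cutting section. The bundle $\zeta$ (and its analogue for the diagonal) is globally defined, but its canonical section exists only after restriction to the fibre $Z$ (resp.\ to $\widehat E$), where the ambient bundle becomes trivial (resp.\ where the two pulled-back copies of $V$ can be compared); one must extend this fibrewise section to a global one without creating spurious common zeros. This is harmless here precisely because $p^*s$ (resp.\ $(p\times p)^*s'$) already fails to vanish off $Z$ (resp.\ off $\widehat E$), so the combined zero locus is unchanged by the choice of extension there — but it is the place where care is needed, and it is the reason we restrict to Grassmann bundles rather than attempting to cut out the point in a single stroke.
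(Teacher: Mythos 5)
Your proof is correct, and the overall strategy — cut out the pullback fibre $p^{-1}(pt)$ (resp.\ $E\times_X E$) by $p^*s$ (resp.\ $(p\times p)^*s'$), then cut out the point (resp.\ the diagonal) within that fibre by a canonical map of tautological bundles, and extend the fibrewise section using a bump function/partition of unity — is the same as the paper's. The genuine difference is in how the fibrewise cutting bundle is organised: you peel the flag bundle into a tower of single Grassmann bundles $E_r\to\cdots\to E_0=X$ and induct, using on each step the rank $d(m-d)$ bundle $\Hom(\mathcal S,\cdot)$ with its tautological section $\mathcal S\hookrightarrow W\twoheadrightarrow W/\mathcal S$; the paper instead follows Fulton and constructs in one stroke, on $BP\times BP$, the bundle of \emph{compatible} homomorphisms $\xi_{BP}=\{\oplus h_i\in\oplus\Hom(\pi_1^*\mathcal U_i,\pi_2^*\mathcal V_i)\mid q\circ h_i=h_{i+1}\circ\iota\}$ together with its tautological section on $BP\times_{BG}BP$, then pulls this back via $f\times f$. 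Your iterated version avoids the compatibility bookkeeping at the cost of an induction and of somewhat ad hoc auxiliary bundles (e.g.\ $\Hom(\mathcal S,\C^{m-d})$ in the point case, which coincides with the natural bundle only after restriction to the fibre); the paper's version handles any parabolic at once and stays closer to Fulton's algebraic original, pulling the whole construction back from the universal flag bundle. Both extension arguments are the same partition-of-unity step, and both transversality checks reduce to the same two-block differential computation; note only that the resulting matrix is anti-block-triangular rather than block-triangular (the $p^*s$-component vanishes on $T_{e_0}Z$ and is an isomorphism on the normal directions, while the $\sigma$-component is an isomorphism on $T_{e_0}Z$), which of course still gives invertibility.
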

\begin{proof}
\red{
We reproduce a proof for the diagonal case here for the sake of completeness.
By universality, $E\xrightarrow{p} X$ is the pullback of the universal flag bundle along some map $f: X\to BSL(k)$.
The idea is to consider the following pullback diagram
\[
\xymatrix{
E\times E \ar[rr] \ar[d]^{p\times p} & & BP\times BP \ar[d] \\
X\times X \ar[rr]^(0.4){f\times f} & & BSL(k)\times BSL(k)
}
\]
and construct a bundle over $E\times E$ as the sum of pullbacks of bundles over $BP\times BP$ and $X\times X$.
Let $G$ be $SL(k)$ and
$G/P \to BP \to BG$ be the universal flag bundle, whose fibre we identify with the space of flags
$\{ 0\subset  U_1 \subset U_2 \subset \ldots \subset U_l \subset \C^{k} \}$.
We have the corresponding tautological sequence of bundles on $BP$:
\[
 \mathcal{U}_1 \stackrel{\iota}{\hookrightarrow} \mathcal{U}_2 \stackrel{\iota}{\hookrightarrow} \cdots \stackrel{\iota}{\hookrightarrow}
 \mathcal{U}_l \stackrel{\iota}{\hookrightarrow} \gamma_{k}
 \xrightarrow{q} \mathcal{V}_1 \xrightarrow{q} \cdots \xrightarrow{q} \mathcal{V}_l,
\]
where $\gamma_{k}$ is the pullback of the universal vector bundle over $BG$ and $\mathcal{V}_i=\gamma_{k}/\mathcal{U}_i$.
Denote by $\pi_1,\pi_2: BP\times BP \to BP$ the left and the right projections.
The following rank $\dim_\C(G/P)$ bundle over $BP\times BP$ is defined in \cite[Proposition 7.5]{F}:
\[
\xi_{BP}=\left\{ \bigoplus_{i=1}^l h_i\in \bigoplus_{i=1}^l \Hom(\pi_1^*(\mathcal{U}_i),\pi_2^*(\mathcal{V}_i)) 
	\mid \forall i, \ q\circ h_i = h_{i+1}\circ \iota  \right\}.
\]
Restricted on the fibre product $BP\times_{BG} BP\subset BP\times BP$,
it admits a section $s_{BP}: BP\times_{BG} BP\to \xi_{BP}$
defined by the tautological map $\pi_1^*(\mathcal{U}_i) \to \pi_1^*(\gamma_{k})=\pi_2^*(\gamma_{k})\to \pi_2^*(\mathcal{V}_i)$,
which vanishes exactly along the diagonal $\Delta(BP)\subset BP\times_{BG} BP$.
By a partition of unity argument, we can extend $s_{BP}$ to the whole $BP\times BP$, which
we denote by the same symbol $s_{BP}$
(note that this is the place where we have to work in our smooth setting unlike Fulton's original work in the holomorphic setting).
}

\red{
Let $\xi_X$ be a bundle over $X\times X$ with a generic section $s_X$ which represents the diagonal of $X$.
Note that the pullback bundle $(p\times p)^*(\xi_X)$ admits a section $(p\times p)^*(s_X)$ 
whose zero locus is $(p\times p)^{-1}(\Delta(X))=E\times_X E$.
The bundle $\xi=(p\times p)^*(\xi_X)\oplus (f\times f)^*(\xi_{BP})$ over $E$ has rank $\dim(E)/2=(\dim(G/P)+\dim(X))/2$.
The section of $\xi$ defined by $(p\times p)^*(s_X)\oplus (f\times f)^*(s_{BP})$ 
 vanishes exactly along the diagonal as the following is a pullback
 \[
 \xymatrix{
 E\times_X E \ar[r]^(0.45){f\times f}  & BP \times_{BG} BP\\
 \Delta(E) \ar[r] \ar[u] & \Delta(BP). \ar[u]
 }
 \]
 }
\end{proof}

\red{
\begin{prop}\label{prop:point}
When $G$ is of type $C$, the diagonal of $G/P$ for any parabolic subgroup $P$ 
of type $C$ (including $P=B$) is representable.
Consequently, the base point in $G/B$ (resp. the diagonal in $G/B\times G/B$) is representable if and only if $G$ is of type $A$ or $C$.
\end{prop}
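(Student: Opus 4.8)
The plan is to present $G/P$, when $P$ is of type $C$, as an iterated projective bundle over a point and to apply Proposition~\ref{bundle} once at each stage of the tower. Take $G$ to be the compact symplectic group $Sp(k)$ acting on $\C^{2k}$ with a symplectic form $\omega$. That $P$ is of type $C$ means its Levi factor is of type $C_{k-d}$ for some $0\le d\le k$, equivalently that the crossed nodes of the Dynkin diagram of $G$ form the initial segment $\{\alpha_1,\ldots,\alpha_d\}$; then $G/P$ is the manifold of partial isotropic flags $0\subset U_1\subset U_2\subset\cdots\subset U_d\subset\C^{2k}$ with $\dim_\C U_j=j$, and $P=B$ is the case $d=k$, where a complete isotropic flag already determines the missing subspaces $U_{k+i}=U_{k-i}^{\perp_\omega}$.

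First I would build a tower $Z_0=pt,\ Z_1,\ \ldots,\ Z_d=G/P$ as follows. Over $Z_{j-1}$ we carry the tautological isotropic flag $\mathcal U_1\subset\cdots\subset\mathcal U_{j-1}$ of subbundles of the trivial bundle $\underline{\C^{2k}}$ with its constant form $\omega$; since $\mathcal U_{j-1}$ is isotropic, its $\omega$-orthogonal complement $\mathcal U_{j-1}^{\perp_\omega}$ contains $\mathcal U_{j-1}$ and the quotient $\mathcal U_{j-1}^{\perp_\omega}/\mathcal U_{j-1}$ is a symplectic bundle of rank $2(k-j+1)$. Set $Z_j=\mathbb P(\mathcal U_{j-1}^{\perp_\omega}/\mathcal U_{j-1})\to Z_{j-1}$ and let the tautological line define $\mathcal U_j/\mathcal U_{j-1}$, hence $\mathcal U_j$; this $\mathcal U_j$ is again isotropic, since a line in $\mathcal U_{j-1}^{\perp_\omega}$ is automatically $\omega$-orthogonal to $\mathcal U_{j-1}$ and, $\omega$ being alternating, to itself. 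One checks that $Z_d\cong G/P$ compatibly with the tautological flags. Each map $Z_j\to Z_{j-1}$ is the projectivisation of a complex vector bundle whose determinant is trivial (for instance $\mathcal U_{j-1}^{\perp_\omega}\cong(\C^{2k}/\mathcal U_{j-1})^{*}$ via $\omega$, whence $c_1$ of the quotient vanishes), so it is a flag bundle of type $A$ with fibre $\C P^{2(k-j)+1}$ in the strict sense of Proposition~\ref{bundle}. Since the diagonal of $Z_0=pt$ is trivially representable, applying Proposition~\ref{bundle} $d$ times shows that the diagonal of $Z_d=G/P$ is representable; pulling the resulting bundle back along $\iota(x)=(x,pt)$ as in the introduction then represents the base point of $G/P$.

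For the concluding equivalence, the ``only if'' direction is \cite[Theorem~17]{kapr}: the base point of $G/B$, and hence the diagonal of $G/B\times G/B$, is not representable unless $G$ is of type $A$ or $C$. The ``if'' direction follows from Fulton \cite[Proposition~7.5]{F} in type $A$ and from the case $d=k$ above in type $C$, together with the remark in the introduction that representability of the diagonal of $X\times X$ implies that of the base point of $X$.

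The step that needs care is the geometric input that, for a symplectic form, each successive choice of the next isotropic line lies in a \emph{projective space} bundle; this uses only that $\omega$ is alternating, so $\omega(v,v)=0$ for every $v$. For an orthogonal group the analogous step --- choosing an isotropic line for a nondegenerate symmetric form --- lies in a \emph{quadric} hypersurface, and a smooth quadric of dimension at least three is not a flag bundle of type $A$ over a point, so both the tower construction and, ultimately, representability fail there, matching \cite[Theorem~17]{kapr}. The remaining verifications --- the identification $Z_d\cong G/P$ with its tautological flag, and the vanishing of the relevant determinants --- are routine.
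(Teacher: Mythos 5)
Your proof is correct and follows the same strategy as the paper's: realise $G/P$ (for $P$ of type $C$) as an iterated projectivisation of the symplectic quotient bundles $\mathcal U_{j-1}^{\perp_\omega}/\mathcal U_{j-1}$ over the tower of isotropic flag manifolds, apply Proposition~\ref{bundle} at each stage, and then combine with~\cite[Theorem~17]{kapr} and~\cite[Proposition~7.5]{F} for the biconditional. Your explicit check that $\det\bigl(\mathcal U_{j-1}^{\perp_\omega}/\mathcal U_{j-1}\bigr)$ is trivial, so that each step really is an $SL$-flag bundle, is a worthwhile clarification the paper leaves implicit; the only slip is in your heuristic aside about orthogonal groups, since $Q_4\cong Gr_2(\C^4)$ \emph{is} a type $A$ flag manifold, so ``dimension at least three'' overstates the case (though this does not affect the argument).
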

}
\begin{proof}
\red{For $1\le k'\le k$, let $F^k_{1,2,\ldots,k'}=Sp(k)/(T^{k'}\cdot Sp(k-k'))$ be
the isotropic flag manifold with respect to a symplectic form $\omega$ in $\C^{2k}$:
\[
F^k_{1,2,\ldots,k'}=\{ 0\subset U_1 \subset U_2 \subset \cdots \subset U_{k'} \subset
 U_{k'}^\perp \subset \cdots U_1^\perp \subset \C^{2k}\mid \dim_\C(U_i)=i\},
\]
where $U_i^\perp=\{v\in \C^{2k}\mid \omega(u,v)=0 \ \forall u\in U_i\}$.}
Denote the tautological bundle on $F^k_{1,2,\ldots,k'}$ corresponding to $U_i$ by $\mathcal{U}_i$.
By dropping $U_{k'}$, we obtain a projection $p: F^k_{1,2,\ldots,k'}\to F^k_{1,2,\ldots,k'-1}$, 
which makes $F^k_{1,2,\ldots,k'}$ the projectivisation of 
$\mathcal{U}_{k'-1}^\perp/\mathcal{U}_{k'-1}$ over $F^k_{1,2,\ldots,k'-1}$.
By Proposition \ref{bundle}, if the diagonal of $F^k_{1,2,\ldots,k'-1}$ is representable, so is that of $F^k_{1,2,\ldots,k'}$.
This procedure can be iterated to $F^k_{1}=\C P^{2k-1}$, of which the diagonal is representable since it is a type $A$ partial flag manifold.

\red{For a full flag manifold of an arbitrary type,
as is reviewed in Introduction, the base point in $G/B$ is not representable 
unless $G$ is of type $A$ or $C$ (\cite[Theorem 17]{kapr}), and
the diagonal is representable when $G$ is of type $A$ (\cite[Proposition 7.5]{F}).
Thus, the second statement holds from the first.
}
\end{proof}

For exceptional Lie groups, the arguments in \cite[\S 6]{kapr} extend to show:
\begin{thm}\label{exceptional}
When $G$ is of exceptional type, the base point in $G/P$ is not representable for any (proper) parabolic subgroup $P$
(including $P=B$).
\end{thm}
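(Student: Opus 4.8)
The plan is to combine Lemma~\ref{lem:point} with the surjectivity of the Atiyah--Hirzebruch homomorphism recalled in Section~\ref{schubert}, reducing the statement to a cohomological assertion about a single Schubert class in the full flag manifold $G/B$, and then to extend the computations of \cite[\S 6]{kapr} --- which settle the case $P=B$ --- to that class. Throughout I may replace $G$ by its universal cover, and $P$ and $B$ by their preimages, so that $G$ and its maximal compact subgroup $K$ are simply connected; this changes neither $G/P$ nor $G/B$.

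Write $n=\dim_\C(G/P)$ and let $\pi\colon G/B\to G/P$ be the projection, with fibre $P/B$. If the base point of $G/P$ were representable, Lemma~\ref{lem:point} would give a rank $n$ complex bundle $\xi\to G/P$ whose $n$-th Chern class is the generator of $H^{2n}(G/P)$, so by naturality $\pi^*\xi$ is a bundle on $G/B$ with $c_n(\pi^*\xi)=\pi^*(c_n(\xi))$ equal to the class $[P/B]\in H^{2n}(G/B)$ of a fibre. Now, since $K$ is simply connected the Atiyah--Hirzebruch homomorphism $H^*(BT)\to K^0(G/B)$ is surjective, so every element of $K^0(G/B)$ is a $\Z$-linear combination of line bundles $L_\lambda$ with $c_1(L_\lambda)=\lambda\in H^2(G/B)$; as the total Chern class is multiplicative and depends only on the stable class, the Chern classes of any bundle on $G/B$ lie in the subring $A\subseteq H^*(G/B)$ generated by $H^2(G/B)$. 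In particular $[P/B]\in A$. (Note the analogous conclusion fails on $G/P$ itself, where $K^0$ is generated by tautological bundles rather than line bundles, which is why we pass to $G/B$.) Thus the theorem is reduced to showing that $[P/B]\notin A$ whenever $G$ is exceptional and $P\subsetneq G$; for $P=B$ this is the statement $[pt]\notin A$, proved in \cite[\S 6]{kapr}.

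For a proper $P$ I would split according to the Levi subgroup $L$ of $P$. If every simple factor of $L$ is of type $A$ or $C$, then $P/B$ is a product of type $A$ and type $C$ full flag manifolds, each an iterated projectivisation --- classically for type $A$, and as in the proof of Proposition~\ref{prop:point} for type $C$ --- so $\pi$ is a composite of type $A$ flag bundles and Proposition~\ref{bundle} shows that representability of the base point of $G/P$ would force that of $G/B$, a contradiction (equivalently, $[P/B]\notin A$ follows from the $P=B$ case). If instead $L$ has a simple factor not of type $A$ or $C$ --- a finite, explicitly enumerable family of parabolics in each exceptional type (none for $G_2$, a single maximal parabolic for $F_4$) --- I would follow the method of \cite[\S 6]{kapr}: fixing a torsion prime $p$ of $G$, reduce modulo $p$ and show that $[P/B]$ is not in the image of $\F_p[H^2(G/B)]\to H^*(G/B;\F_p)$, using the known descriptions of $H^*(G/B;\F_p)$ at the torsion primes. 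The main obstacle is precisely this last step: one must check, by a bounded case analysis over the exceptional groups and the listed parabolics, that the Schubert class $[P/B]$ genuinely detects the $p$-torsion of $H^*(G/B)$ and hence escapes the subring generated by degree two classes.
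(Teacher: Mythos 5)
Your reduction via the Atiyah--Hirzebruch surjectivity is sound (and parallels the paper's), and the Case 1 argument for Levi factors all of type $A$ or $C$ does work through Proposition~\ref{bundle}. But Case 2 is genuinely missing: you say you ``would follow the method of [\S 6]{kapr} \ldots\ by a bounded case analysis'' but never carry it out, and that is precisely where the work lies. The parabolics whose Levi has a $B$, $D$, or $E$ factor are numerous (for $E_7$ and $E_8$ the list is not small), and nothing in your proposal shows that $[P/B]$ escapes the subring generated by $H^2(G/B)$ for those.

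The paper avoids the case split altogether. It stays on $K/H$ and works with $W(H)$-invariants: the Atiyah--Hirzebruch map restricts to a surjection $H^*(BT)^{W(H)}\twoheadrightarrow K^0(K/H)$, so it suffices to show that the top class $u_{2n}\in H^{2n}(K/H)$ does not lie in the image of $c^*\colon H^*(BT)^{W(H)}\to H^*(K/H)$, i.e.\ that the ``relative torsion index'' $\tau_{K/H}$ (smallest positive integer with $\tau_{K/H}\,u_{2n}\in\operatorname{im}c^*$) exceeds $1$. The key insight you miss is the multiplicativity inequality
\[
\tau_{K/T}\ \le\ \tau_{H/T}\cdot\tau_{K/H},
\]
obtained by factoring the point class of $K/T$ as $\pi^*(u_{2n})\cdot v$ with $v$ restricting to the point class of the fibre $H/T$. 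Combined with Totaro's table of torsion indices $\tau_{K/T}$ for all compact simple groups, one checks at a glance that $\tau_{K/T}>\tau_{H/T}$ for every proper parabolic $H$ of every exceptional $K$, whence $\tau_{K/H}>1$ uniformly. This single inequality replaces both of your cases --- in Case 1 one has $\tau_{H/T}=1$ and the inequality is trivial, while in Case 2 it is exactly the tool you were missing. Note also that your criterion $[P/B]\notin A$ (working on $G/B$ with the full $H^*(BT)$) is a priori stronger than the paper's $\tau_{K/H}>1$ (which only quantifies over $W(H)$-invariant polynomials); even if it could be verified, you would be proving more than needed, with correspondingly more effort.
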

\begin{proof}
By taking the universal covering, we can assume $K$ is simply-connected.
Let $H=P\cap K$.
We shall see that there is no bundle $\xi$ with $c_{n}(\xi)=u_{2n} \in H^{2n}(K/H)$,
where $u_{2n}$ is the generator of the top degree cohomology.
The flag bundle $H/T \hookrightarrow K/T \to K/H$ induces isomorphisms
\[
H^*(K/H)\simeq H^*(K/T)^{W(H)}, \quad K^0(K/H)\simeq K^0(K/T)^{W(H)},
\]
where $W(H)$ is the Weyl group of $H$.
The universal flag bundle $K/T \overset{c}{\hookrightarrow} BT \to BK$ 
induces a map $c^*: H^*(BT)\to H^*(K/T)$, which is compatible with the action of $W(K)$.
The Atiyah-Hirzebruch homomorphism
$H^*(BT)\to K^0(K/T)$ is also compatible with the action of $W(K)$ and it restricts to a surjection
$H^*(BT)^{W(H)} \to K^0(K/T)^{W(H)}\simeq K^0(K/H)$.
This asserts that any bundle over $K/H$ stably splits into line bundles when pulled back via $K/T\to K/H$,
and hence, its Chern classes are polynomials in the elements of $H^2(K/T)\simeq c^*(H^2(BT))$.
Let $\tau_{K/H}$ be the smallest positive integer such that 
$\tau_{K/H}\cdot u_{2n}$ is in the image of
\[
c^*: H^*(BT)^{W(H)} \to  H^*(K/T)^{W(H)}\simeq H^*(K/H)
\]
induced by $c^*: H^*(BT)\to H^*(K/T)$. 
Consider the flag bundle
\[
H/T \hookrightarrow K/T \to K/H.
\]
There is a class $v\in H^*(K/T)$ which restricts to the class of the base point in $H^*(H/T)$.
Since the class of the base point in $H^{2n}(K/T)$ is the product of the pullback of $u_{2n}$
with $v$, we have
\[
\tau_{K/T} \le \tau_{H/T} \cdot \tau_{K/H}.
\]
On the other hand, it is known (see \cite{T})
\red{
\begin{align*}
\tau_{SU(k)/T}=\tau_{Sp(k)/T}=1,
\tau_{Spin(k)/T}=\begin{cases} 2 & (7\le k\le 12) \\ 4 & (k=13,14) \\ 8 & (k=15,16), \end{cases} \\
\tau_{G_2/T}=2, \tau_{F_4/T}=6, \tau_{E_6/T}=6, \tau_{E_7/T}=12, \tau_{E_8/T}=2880.
\end{align*}
}
Parabolic subgroups are in one-to-one correspondence with subgraphs of the Dynkin diagram.
So for any (proper) parabolic subgroup of an exceptional Lie group, we can see $\tau_{K/T}>\tau_{H/T}$ from the list above.
Therefore, $t_{K/H}>1$ and $u_{2n}$ cannot be the Chern class of a bundle.
\end{proof}

\section{Grassmannian manifolds}
An argument similar to the one in the previous section also works for some $G/P$ with $G$ of classical types.
\red{ 
Due to the low rank equivalences 
$A_1=B_1=C_1=D_1, B_2=C_2, D_2=A_1\times A_1$,
and $D_3=A_3$, we assume $k>2$ for $B_k$ and $k>3$ for $D_k$. }

\begin{prop}\label{orthogonal-Grassmann}
\red{When $G$ is of type $B_k \ (k>2)$ or $D_k \ (k>3)$
and $P$ is a parabolic subgroup of type $A$, 
then the base point in $G/P$ is not representable.
In particular,
the base point in the maximal orthogonal Grassmannian $OG_k(\C^{2k})$
of maximal isotropic subspaces in the complex quadratic vector space $\C^{2k}$ is representable if and only if $k\le 3$.}
\end{prop}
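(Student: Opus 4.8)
The plan is to reuse the mechanism in the proof of Theorem~\ref{exceptional}, feeding in the groups $Spin(2k+1)$ and $Spin(2k)$ instead of the exceptional ones. After passing to the universal cover we may assume $K$ is simply connected, so $K=Spin(2k+1)$ when $G$ is of type $B_k$ and $K=Spin(2k)$ when $G$ is of type $D_k$; set $H=P\cap K$. As recalled there, every complex vector bundle on $K/H$ stably splits into line bundles after pullback along $K/T\to K/H$, so all of its Chern classes lie in the image of $c^*\colon H^*(BT)^{W(H)}\to H^*(K/H)$; hence, by Lemma~\ref{lem:point}, the base point of $K/H$ fails to be representable as soon as the least positive integer $\tau_{K/H}$ with $\tau_{K/H}\cdot u_{2n}$ lying in that image exceeds $1$. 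The flag bundle $H/T\hookrightarrow K/T\to K/H$ moreover gives the inequality $\tau_{K/T}\le\tau_{H/T}\cdot\tau_{K/H}$, exactly as in that proof.

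It then suffices to bound the two outer invariants. Since $P$ is of type $A$, the semisimple part of its Levi factor is a product of special unitary groups, so $H/T$ is the full flag manifold of that semisimple part, namely a product of type $A$ flag manifolds $\prod_i SU(k_i)/T_i$; as the top class of a product of closed oriented manifolds is the cross product of the factors' top classes and $\tau_{SU(k)/T}=1$ (see~\cite{T}), this forces $\tau_{H/T}=1$. For the other invariant, whenever $m\ge 7$ the full flag manifold $Spin(m)/T$ is of type $B$ or $D$ and not of type $A$ or $C$, so by~\cite[Theorem~17]{kapr} its base point is not representable; applying the first paragraph with $H=T$ (so that $W(H)$ is trivial and the relevant image is that of $c^*\colon H^*(BT)\to H^*(Spin(m)/T)$), this says precisely that $\tau_{Spin(m)/T}\ge 2$. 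Under our hypotheses $m=2k+1\ge 7$ in type $B_k$ with $k>2$, and $m=2k\ge 8$ in type $D_k$ with $k>3$, so $\tau_{K/H}\ge\tau_{K/T}/\tau_{H/T}=\tau_{Spin(m)/T}\ge 2$, and the base point of $G/P$ is not representable.

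For the ``in particular'' part, the maximal orthogonal Grassmannian $OG_k(\C^{2k})$ is $SO(2k)/P$ with $P$ the maximal parabolic obtained by deleting one of the two fork nodes of the $D_k$ Dynkin diagram, so the semisimple part of its Levi is $SL(k)$ and $P$ is of type $A$; for $k>3$ the first part gives non-representability. For $k\le 3$ the low rank coincidences $D_1=A_1$, $D_2=A_1\times A_1$ and $D_3=A_3$ identify $OG_k(\C^{2k})$ with a (partial) flag manifold of type $A$ (concretely $OG_3(\C^6)\cong\C P^3$, $OG_2(\C^4)$ is two copies of $\C P^1$, and $OG_1(\C^2)$ is a pair of points), and on such a manifold the base point is representable, since a type $A$ flag manifold is an iterated type $A$ flag bundle over a point and Proposition~\ref{bundle} applies (alternatively, this is~\cite[Proposition~7.5]{F}). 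Hence the base point of $OG_k(\C^{2k})$ is representable if and only if $k\le 3$.

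The step I expect to need the most care is the bookkeeping around $\tau$: confirming that ``$P$ of type $A$'' really forces $\tau_{H/T}=1$ (one must keep in mind that $D_3=A_3$, so a sub-Dynkin-diagram can look like type $D$ while being genuinely of type $A$), and checking that the multiplicativity inequality $\tau_{K/T}\le\tau_{H/T}\cdot\tau_{K/H}$ from the proof of Theorem~\ref{exceptional} transcribes verbatim to the flag bundle $H/T\hookrightarrow K/T\to K/H$ in these classical cases. Everything else is a direct copy of the exceptional argument.
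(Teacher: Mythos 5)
Your proof is logically different from the paper's: for the first statement, the paper simply applies Proposition~\ref{bundle} to the flag bundle $P/B\hookrightarrow G/B\to G/P$ (whose fibre is a type-$A$ flag manifold, or an iterated chain thereof), concluding that representability of the base point in $G/P$ would force representability in $G/B$, which contradicts Proposition~\ref{prop:point}. You instead port the torsion-index machinery from the proof of Theorem~\ref{exceptional}. Both routes are reasonable, but the paper's is shorter and avoids any quantitative input about $\tau_{Spin(m)/T}$.

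There is, however, a genuine gap in your version. You argue that $\tau_{Spin(m)/T}\ge 2$ follows from the non-representability of the base point in $Spin(m)/T$ (via \cite[Theorem 17]{kapr}). But the implication established in your first paragraph (and in the proof of Theorem~\ref{exceptional}) is only one-directional: $\tau_{K/H}>1$ \emph{implies} non-representability, because every Chern class of a bundle on $K/H$ lies in the image of $c^*$. You are invoking the converse, ``not representable $\Rightarrow\tau>1$,'' equivalently ``$\tau=1\Rightarrow$ representable,'' which does not follow: knowing $u_{2n}=c^*(p)$ for some $p\in H^{2n}(BT)$ does not by itself produce a rank-$n$ bundle with top Chern class $u_{2n}$ (one would need $p$, or some substitute for it, to factor suitably). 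The inequality $\tau_{Spin(m)/T}\ge 2$ for $m\ge 7$ is true, but it is a computed fact — cite Totaro \cite{T,T2} directly, exactly as the paper does in the proof of Theorem~\ref{exceptional} and in Remark~\ref{rem:final} — rather than something you can read off from non-representability. (There is also a likely circularity: \cite[Theorem 17]{kapr} itself rests on Totaro's torsion-index computations, so you would be using a consequence to re-derive its input.) Once $\tau_{Spin(m)/T}\ge 2$ is taken from \cite{T,T2}, the rest of your argument — $\tau_{H/T}=1$ for $H$ of type $A$, the multiplicativity inequality $\tau_{K/T}\le\tau_{H/T}\cdot\tau_{K/H}$, and the treatment of $OG_k(\C^{2k})$ via the low-rank isomorphisms $D_1=A_1$, $D_2=A_1\times A_1$, $D_3=A_3$ — is correct and agrees with the paper's second part.
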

\begin{proof}
\red{
If the base point in $G/P$ is representable, so would be in $G/B$ by
Proposition \ref{bundle} applied to the flag bundle $P/B\hookrightarrow G/B \to G/P.$
The first statement follows from Proposition \ref{prop:point}.
}

\red{
For the second statement, recall from \cite[\S 1.7]{M} that 
the connected component of $OG_k(\C^{2k})$ containing the identity is
diffeomorphic to the flag manifold $SO(2k)/U(k) \simeq SO(2k-1)/U(k-1)$.
Thus, the base point is representable if and only if $k\le 3$.
}
\end{proof}

The base point is representable in $G/P$ if the diagonal is representable in $G/P\times G/P$.
The following example shows that the converse is not always true.
\begin{thm}\label{main-thm}
Let $Lag_\omega(\C^{2k})\simeq Sp(k)/U(k)$ be the complex Lagrangian Grassmannian of maximal isotropic subspaces 
in the complex symplectic vector space $\C^{2k}$ with a symplectic form $\omega$ (see \cite[\S 1.7]{M}).
\begin{enumerate}
\item The base point in $Lag_\omega(\C^{2k})$ is representable.
\item When $k\equiv 2 \mod 4$, the diagonal in $Lag_\omega(\C^{2k})\times Lag_\omega(\C^{2k})$ is not representable.
\end{enumerate}
\end{thm}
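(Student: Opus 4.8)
The plan is to treat the two parts separately: part~(1) by exhibiting an explicit bundle and invoking Lemma~\ref{lem:point}, and part~(2) by combining the two necessary conditions of Proposition~\ref{necessary} with a $2$-primary computation in the cohomology of $X\times X$, where throughout $X:=Lag_\omega(\C^{2k})$ and $d:=\dim_\C X=\binom{k+1}{2}$.

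For part~(1), let $\mathcal{U}$ be the tautological rank-$k$ subbundle on $X$; the symplectic form identifies $\C^{2k}/\mathcal{U}\cong\mathcal{U}^*$, so $TX\cong\mathrm{Sym}^2(\mathcal{U}^*)$. Writing $x_1,\dots,x_k$ for the Chern roots of $\mathcal{U}^*$, one has $c_{\mathrm{top}}(TX)=\prod_{1\le i\le j\le k}(x_i+x_j)=2^k\cdot\prod_{i=1}^k x_i\cdot\prod_{1\le i<j\le k}(x_i+x_j)=2^k\,c_k(\mathcal{U}^*)\,c_{\binom{k}{2}}(\Lambda^2\mathcal{U}^*)$. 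On the other hand $c_{\mathrm{top}}(TX)$ is the Euler class $\chi(X)[pt]=2^k[pt]$, since the Poincar\'e polynomial of $X$ is $\prod_{i=1}^k(1+t^{2i})$. As $H^*(X)$ is torsion free (all Schubert cells are even-dimensional) we may cancel $2^k$ to obtain $[pt]=c_k(\mathcal{U}^*)\,c_{\binom{k}{2}}(\Lambda^2\mathcal{U}^*)$ --- equivalently, the familiar fact that the top Schubert class of the Lagrangian Grassmannian is this product of linear forms. Hence $\xi:=\mathcal{U}^*\oplus\Lambda^2\mathcal{U}^*$ has rank $k+\binom{k}{2}=\binom{k+1}{2}=d$ and $c_{\mathrm{top}}(\xi)=[pt]$, so Lemma~\ref{lem:point} shows the base point of $X$ is representable.

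For part~(2), suppose $\xi\to X\times X$ represents $\Delta(X)$ with $k\equiv2\pmod4$; write $k=2m$ with $m$ odd, so that $d=m(2m+1)$ is odd. By Proposition~\ref{necessary}, $c_d(\xi)=[\Delta(X)]$ and $\delta^*\xi\cong TX\cong\mathrm{Sym}^2(\mathcal{U}^*)$, where $\delta\colon X\hookrightarrow X\times X$ is the diagonal embedding. I would first assemble the algebraic data: the presentation $H^*(X)=\Z[e_1,\dots,e_k]/J$ with $e_i=c_i(\mathcal{U}^*)$ and $J$ generated by the relations $e_j(x_1^2,\dots,x_k^2)=0$ coming from $\prod_i(1-x_i^2)=1$ (in particular $e_1^2=2e_2$), the total Chern class $c(TX)=\prod_{i\le j}(1+x_i+x_j)$, and the class $[\Delta(X)]$ read off from the Schubert basis of $H^*(X)$ by K\"unneth and Poincar\'e duality. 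Next, I would run the argument in the proof of Theorem~\ref{exceptional}, now for the simply connected group $Sp(k)\times Sp(k)$: every bundle on $X\times X$ pulls back to a stable sum of line bundles on $Sp(k)/T\times Sp(k)/T$ whose Chern roots form an $S_k\times S_k$-invariant multiset, so $c(\xi)$ is a symmetric function in the two sets of Chern roots $x_1,\dots,x_k$ and $y_1,\dots,y_k$, and the conditions $c_d(\xi)=[\Delta(X)]$ and $\delta^*\xi\cong TX$ turn into a system of equations in such symmetric functions. The final step --- which I expect to be the crux --- is to show this system is inconsistent precisely when $m$ is odd, by propagating the relations $e_1^2=2e_2$, etc.\ through the equations and extracting a $\Z/4$-valued obstruction; for $k=2$ this recovers the non-representability of the diagonal of $Q^3\cong Lag_\omega(\C^4)$ from \cite[Theorem~12]{PSP}.

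The main obstacle is precisely that last step, and two features make it delicate. First, one cannot restrict attention to split bundles --- the bundles $\mathcal{U}^*$ and $\Lambda^2\mathcal{U}^*$ used in part~(1) are themselves non-split --- so one genuinely needs the $K$-theoretic surjection $H^*(BT\times BT)^{S_k\times S_k}\twoheadrightarrow K^0(X)\otimes K^0(X)$ to control which classes in $H^{2d}(X\times X)$ can arise as $c_d$ of a rank-$d$ bundle. Second, one must pinpoint the $2$-adic invariant that singles out the residue $k\equiv2\pmod4$. A sensible first target is to re-derive the $k=2$ case by this method and then track how the extra $2$-divisibility in $H^*(Lag_\omega(\C^{2k}))$ coming from $e_1^2=2e_2$ and its higher analogues grows with $k$, so that the parity of $m=k/2$ emerges as the governing quantity.
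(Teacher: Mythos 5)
Part~(1) is essentially the paper's proof: your bundle $\xi=\mathcal{U}^*\oplus\Lambda^2\mathcal{U}^*$ is exactly the bundle $\xi$ the paper constructs (it pulls back to $\bigoplus_i L_{x_i}\oplus\bigoplus_{i<j}L_{x_i+x_j}$ on $Sp(k)/T$), and the verification that $c_d(\xi)=\prod_i x_i\prod_{i<j}(x_i+x_j)=u_{2n}$ via the Euler characteristic $2^k=|W(Sp(k))|/|W(U(k))|$ matches Lemma~\ref{lem:cohomology}.

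Part~(2), however, has a genuine gap, and you acknowledge it yourself: you never produce the obstruction that singles out $k\equiv 2\pmod 4$. Your plan — propagate the relations $e_1^2=2e_2$, etc.\ through a system of symmetric-function equations coming from the $K$-theoretic surjection — is a plausible-sounding program, but the paper does not need anything like that machinery, and it is not at all clear your program would converge. The paper's decisive ingredient is a short Steenrod-square computation (Lemma~\ref{lem:spin}): by Wu's formula, $Sq^2(c_{n-1}(\xi))=c_1(\xi)c_{n-1}(\xi)+c_n(\xi)$ in $H^*(X;\Z/2)$; combined with the identity $Sq^2(u_{2n-2})=(k-1)u_{2n}$ in $H^*(Sp(k)/U(k);\Z/2)$ and the fact that $n=k(k+1)/2$ is odd when $k\equiv 2\pmod 4$, this forces $c_1(\xi)\equiv 0\pmod 2$ for \emph{any} rank-$n$ bundle with $c_n(\xi)=\pm u_{2n}$. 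The contradiction then comes cheaply: if $\xi'$ represented the diagonal, then $i_1^*\xi'$ and $i_2^*\xi'$ would represent the base point, hence be spin by the lemma, while $\Delta^*\xi'\cong TX$ by Proposition~\ref{necessary}(2); since $c_1(\Delta^*\xi')=c_1(i_1^*\xi')+c_1(i_2^*\xi')$ this forces $c_1(TX)\equiv 0\pmod 2$, contradicting $c_1(TX)=(k+1)u_1$ with $k$ even. Both parities ($n$ odd, $k$ even) are used, and their conjunction is exactly $k\equiv 2\pmod 4$. So your approach is missing the key idea — the Wu-formula ``spin'' lemma — and the computation you would have to substitute for it (characterizing all possible $c_d$ on $X\times X$ via symmetric functions) is far heavier and not carried out.
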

Note that there is a $p$-local homotopy equivalence
$Sp(k)/U(k)\simeq_{p} SO(2k+1)/U(k)$ for odd primes $p$ \red{(\cite{Harris})}, 
so $2$-torsion plays an important role in our problem.
Our proof of the theorem is based on the Steenrod operations, which is similar to that of~\cite[Theorem 11]{PSP}.
We need a few lemmas.

\begin{lem}\label{lem:tangent}
The tangent bundle of a flag manifold $K/H$ is 
\[
    T(K/H)=
\bigoplus_{\beta\in \Pi^+ \setminus \Pi^+_H} L_\beta,
\] 
where $\Pi^+$ (resp. $\Pi^+_H$) is the set of positive roots of $K$ (resp. $H$).
In particular for $Sp(k)/U(k)$, we can take
$\Pi^+=\{2x_i\mid 1\le i\le k\}\cup \{x_i\pm x_j\mid 1\le i<j\le k\}$ and
$\Pi^+_H=\{x_i- x_j\mid 1\le i<j\le k\}$, hence we have
\[
    T(Sp(k)/U(k))\simeq \left(\bigoplus_i L_{2x_i}\right) \oplus \left( \bigoplus_{i<j} L_{x_i+x_j} \right).
\]
\end{lem}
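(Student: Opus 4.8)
The plan is to identify $T(K/H)$ with the bundle associated to the isotropy representation of $H$ and then read off the occurring weights from the root systems of $K$ and of $H$, the concrete computation for $Sp(k)/U(k)$ being a matter of identifying the two root systems. First I would recall that for a closed subgroup $H\subseteq K$ the tangent bundle of $K/H$ is the associated bundle $K\times_H(\mathfrak{k}/\mathfrak{h})$, and that when $K/H\simeq G/P$ is equipped with its $G$-invariant complex structure the (complex) tangent bundle is $K\times_H(\mathfrak{g}/\mathfrak{p})$, with $H\subseteq P$ acting on $\mathfrak{g}/\mathfrak{p}$ by the adjoint action. Using the root-space decomposition $\mathfrak{g}=\mathfrak{t}_\C\oplus\bigoplus_{\alpha\in\Pi}\mathfrak{g}_\alpha$ and the description of $\mathfrak{p}$ as the sum of its Levi subalgebra $\mathfrak{t}_\C\oplus\bigoplus_{\alpha\in\Pi_H}\mathfrak{g}_\alpha$ and its nilradical, one gets (with the sign convention fixed so that the positive roots are the ones that appear) an isomorphism of $T$-modules $\mathfrak{g}/\mathfrak{p}\cong\bigoplus_{\beta\in\Pi^+\setminus\Pi^+_H}\mathfrak{g}_\beta$, each $\mathfrak{g}_\beta$ being the weight line of weight $\beta$. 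Pulling $T(K/H)$ back along $c\colon K/T\to K/H$ and using $K\times_T\mathfrak{g}_\beta=L_\beta$ gives $c^*T(K/H)=\bigoplus_{\beta\in\Pi^+\setminus\Pi^+_H}L_\beta$; since $H^*(K/H)=H^*(K/T)^{W(H)}$ and $K^0(K/H)=K^0(K/T)^{W(H)}$ inject into the cohomology and $K$-theory of $K/T$, this is the sense in which the displayed identities hold, and it is all that the later Chern-class computations use.

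It then remains to identify the two root systems. The group $Sp(k)$ is of type $C_k$, so relative to its standard maximal torus the roots are $\pm 2x_i$ for $1\le i\le k$ together with $\pm x_i\pm x_j$ for $i<j$, and we take $\Pi^+=\{2x_i\}\cup\{x_i\pm x_j\mid i<j\}$. The subgroup $U(k)\hookrightarrow Sp(k)$ is the stabiliser of a maximal isotropic subspace $U_0\subset\C^{2k}$; it shares the maximal torus of $Sp(k)$ and acts on $\C^{2k}=U_0\oplus(\C^{2k}/U_0)\cong\C^k\oplus(\C^k)^*$, so its root system is the $A_{k-1}$ subsystem $\{\pm(x_i-x_j)\mid i<j\}$ and $\Pi^+_H=\{x_i-x_j\mid i<j\}$. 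Hence $\Pi^+\setminus\Pi^+_H=\{2x_i\mid 1\le i\le k\}\cup\{x_i+x_j\mid i<j\}$, which is the asserted decomposition. As a check, the number of summands is $k+\binom{k}{2}=k(k+1)/2=\dim_\C Sp(k)/U(k)$, and, via the splitting principle $\mathcal{U}^*\simeq\bigoplus_i L_{x_i}$, the formula agrees with the classical identification $T(Sp(k)/U(k))\cong\mathrm{Sym}^2(\mathcal{U}^*)$ of the tangent bundle of the Lagrangian Grassmannian, with $\mathcal{U}$ the tautological subbundle.

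The computation is essentially mechanical once the conventions are fixed, so I expect the only real obstacle to be bookkeeping: pinning down the embedding $U(k)\hookrightarrow Sp(k)$ so that $\Pi^+_H$ sits inside $\Pi^+$ as above, and choosing the complex-structure and sign conventions consistently so that the weights that occur are the positive roots rather than their negatives. Since neither choice affects the characteristic classes extracted afterwards, I would state the conventions explicitly at the outset and then carry out the (routine) identification.
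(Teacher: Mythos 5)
Your argument is essentially the paper's: the paper's entire proof is the single observation that $T(K/H)\simeq K\times_H(L(K)/L(H))$, with the root-space decomposition and the $Sp(k)/U(k)$ computation left implicit, and you have simply filled in those steps. You are also more careful than the paper in one respect worth noting: since weights such as $2x_i$ and $x_i+x_j$ are not characters of $U(k)$, the line bundles $L_\beta$ do not literally live on $Sp(k)/U(k)$, and your reading of the displayed isomorphism as holding after pullback to $K/T$ (where cohomology and $K$-theory inject) is exactly the interpretation the paper tacitly uses later when it defines $\hat\xi$ over $Sp(k)/T$ and descends it.
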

\begin{proof}
    The assertion follows from the standard isomorphism $T(K/H)\simeq K\times_H (L(K)/L(H))$, where
    $L(K)$ and $L(H)$ are Lie algebras of $K$ and $H$ respectively.
\end{proof}

Let $2n=\dim(Sp(k)/U(k))=k(k+1)$.
\begin{lem}[\red{c.f. \cite{P}}]
\label{lem:cohomology}
Let $c_i$ (resp. $q_i$) be elementary symmetric functions
in $x_j$ (resp. $x_j^2$), where $H^*(BT)=\Z[x_1,\ldots,x_k]$.
Then, 
\[
H^*(Sp(k)/U(k)) \simeq \dfrac{\Z[c_1,c_2,\ldots,c_k]}{(\Z[q_1,q_2,\ldots,q_k])^+},
\]
where $(\Z[q_1,q_2,\ldots,q_k])^+$ is the ideal of positive degree polynomials in $q_j$.
In particular,
\begin{align*}
u_{2n} &= \prod_i x_i \prod_{i<j} (x_i+x_j) =\prod_{i=1}^k c_i \\
u_{2n-2} &= \prod_{i=2}^k c_i \\
u_1 &= c_1
\end{align*}
are generators of $H^{2n}(Sp(k)/U(k)),H^{2n-2}(Sp(k)/U(k))$, and $H^{2}(Sp(k)/U(k))$ respectively.
\end{lem}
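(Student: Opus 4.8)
The plan is to take the ring presentation as input — it is Borel's computation, recalled in \cite{P}: for the fibration $Sp(k)/U(k)\to BU(k)\to BSp(k)$, the ring $H^*(BU(k))=\Z[c_1,\dots,c_k]$ is free over $H^*(BSp(k))=\Z[q_1,\dots,q_k]$ (where the $q_i$ restrict to $e_i(x_1^2,\dots,x_k^2)$ on the maximal torus), so the Eilenberg--Moore spectral sequence collapses and $H^*(Sp(k)/U(k))\simeq\Z[c_1,\dots,c_k]\otimes_{\Z[q_1,\dots,q_k]}\Z=\Z[c_1,\dots,c_k]/(q_1,\dots,q_k)$ — and then to extract the three distinguished classes. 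Recall also that $Sp(k)/U(k)$ carries a Schubert cell decomposition with cells only in even real dimension, indexed by strict partitions contained in $(k,k-1,\dots,1)$; hence $H^*$ is torsion-free with Poincar\'e polynomial $\prod_{i=1}^k(1+t^{2i})$, and in particular $H^2$, $H^{2n-2}$ and $H^{2n}$ are each infinite cyclic.

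First I would settle the low and top degrees combinatorially. Writing the relations $q_m=0$ in the form $c_m^2=2c_{m-1}c_{m+1}-2c_{m-2}c_{m+2}+\cdots$ (the coefficients of $c(t)c(-t)=1$, where $c(t)=\sum_{i\ge 0}c_i t^i$), every monomial in the $c_i$ reduces modulo $(q_1,\dots,q_k)$ to a $\Z$-linear combination of squarefree monomials $c_{i_1}\cdots c_{i_r}$ with $i_1<\cdots<i_r$; since the number of these in cohomological degree $2d$ equals the number of subsets of $\{1,\dots,k\}$ of sum $d$, which coincides with $\dim H^{2d}(Sp(k)/U(k))$, the squarefree monomials form a $\Z$-basis (see \cite{P}). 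As $\{1\}$, $\{2,\dots,k\}$ and $\{1,\dots,k\}$ are the unique subsets of $\{1,\dots,k\}$ of sums $1$, $n-1$ and $n=k(k+1)/2$, the classes $c_1$, $\prod_{i=2}^k c_i$ and $\prod_{i=1}^k c_i$ are the respective generators of $H^2$, $H^{2n-2}$ and $H^{2n}$.

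It remains to identify the top generator with $\prod_i x_i\prod_{i<j}(x_i+x_j)$. By Lemma~\ref{lem:tangent}, $T(Sp(k)/U(k))\simeq\bigl(\bigoplus_i L_{2x_i}\bigr)\oplus\bigl(\bigoplus_{i<j}L_{x_i+x_j}\bigr)$, so its Euler class (the top Chern class) is $\prod_i(2x_i)\cdot\prod_{i<j}(x_i+x_j)=2^k\prod_i x_i\prod_{i<j}(x_i+x_j)$, while it pairs with the fundamental class to give the Euler characteristic $\chi(Sp(k)/U(k))=|W(Sp(k))|/|W(U(k))|=2^k$. Hence $\prod_i x_i\prod_{i<j}(x_i+x_j)$ evaluates to $1$ on $[Sp(k)/U(k)]$ and is the generator $u_{2n}$; combined with the previous paragraph, this simultaneously gives the identity $\prod_{i=1}^k c_i\equiv\prod_i x_i\prod_{i<j}(x_i+x_j)\pmod{(q_1,\dots,q_k)}$ of symmetric polynomials in $x_1,\dots,x_k$. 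I expect the one genuinely delicate step to be the integral bookkeeping in the second paragraph — checking that the squarefree monomials span over $\Z$ and not merely over $\Q$, so that the Betti-number count promotes them to an integral basis and makes $\prod_{i=1}^k c_i$ a true generator of $H^{2n}$ rather than a proper multiple of one. An equivalent route, bypassing the ring-theoretic input, is to verify the above polynomial congruence directly by induction on $k$, which carries the same difficulty.
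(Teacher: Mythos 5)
Your proof is correct and follows essentially the same route as the paper's: Borel's description of $H^*(Sp(k)/U(k))$ as a quotient of invariants (your Eilenberg--Moore formulation of the fibration $Sp(k)/U(k)\to BU(k)\to BSp(k)$ is the same input), plus the Gauss--Bonnet computation $c_n(TX)=\chi(X)u_{2n}=2^k u_{2n}$ to pin down $u_{2n}=\prod_i x_i\prod_{i<j}(x_i+x_j)$. The one place you add genuine content is in justifying that $c_1$, $\prod_{i\ge2}c_i$, and $\prod_{i=1}^k c_i$ are actual generators (not merely nonzero multiples of generators): the paper dismisses this as ``the degree reason,'' whereas you supply the squarefree-monomial basis argument -- rewrite $c_m^2$ via $c(t)c(-t)=1$, reduce every monomial to a $\Z$-combination of squarefree ones, and match the count against the Poincar\'e polynomial $\prod_i(1+t^{2i})$ from the Schubert cell decomposition. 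That is indeed the delicate integral step, and you are right to flag it; it is standard (and implicit in the paper's ``c.f.~\cite{P}'' attribution), so the two proofs should be regarded as the same argument at different levels of detail.
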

\begin{proof}
Let $X=Sp(k)/U(k)$.
    Since $H_*(Sp(k))$ has no torsion, by \cite{Borel} we have
\[
H^*(X)\simeq \dfrac{H^*(BT)^{W(U(k))}}{(H^+(BT)^{W(Sp(k))})},
\]
where $(H^+(BT)^{W(Sp(k))})$ is the ideal generated by the positive degree Weyl group invariants.
Since $W(U(k))\curvearrowright H^*(BT)$ is permutation and
$W(Sp(k))\curvearrowright H^*(BT)$ is signed permutation, we have
\[
H^*(X) \simeq \dfrac{\Z[c_1,c_2,\ldots,c_k]}{(\Z[q_1,q_2,\ldots,q_k])^+}.
\]
By the degree reason, it is easy to see $\prod_{i=1}^k c_i \in H^{2n}(X)$,
$\prod_{i=2}^k c_i \in H^{2n-2}(X)$, and 
$c_1\in H^{2}(X)$ are generators.
The Euler characteristic $\chi(X)$ is equal to $\dfrac{|W(Sp(k))|}{|W(U(k))|}$
as the cells in the Bruhat decomposition of $X$ are indexed by the cosets $W(Sp(k))/W(U(k))$.
Since 
\[
    \prod_i (2x_i) \prod_{i<j} (x_i+x_j)=c_n(TX)=\chi(X)u_{2n}=\dfrac{|W(Sp(k))|}{|W(U(k))|}u_{2n}=2^k u_{2n},
    \]
we have $u_{2n} = \prod_i x_i \prod_{i<j} (x_i+x_j)$.
\end{proof}

\begin{lem}\label{lem:spin}
When $k\equiv 2 \mod 4$,
any bundle $\xi\to Sp(k)/U(k)$ representing the base point in $Sp(k)/U(k)$ is spin.
\end{lem}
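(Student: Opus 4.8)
The plan is the following. Since $\xi$ is a complex bundle, its underlying real bundle is canonically oriented, so $w_1(\xi)=0$ automatically and being spin is equivalent to $w_2(\xi)=\bar c_1(\xi)=0$. By Lemma~\ref{lem:cohomology} we have $H^2(Sp(k)/U(k);\Z/2)=\Z/2\langle\bar c_1\rangle$, so it suffices to exclude the possibility $w_2(\xi)=\bar c_1$. I would obtain this by playing the Wu formula of the manifold $X=Sp(k)/U(k)$ against the Cartan/Wu formula for the bundle $\xi$, evaluated in the top degree.

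On the manifold side, $X$ is a closed oriented manifold of real dimension $2n$ with $n=k(k+1)/2$, and from Lemma~\ref{lem:tangent} (using $2x_i\equiv 0$) one gets $w(X)=\prod_{i<j}(1+x_i+x_j)$ in $H^*(X;\Z/2)$, so $w_1(X)=0$ and $w_2(X)=(k-1)\bar c_1$; hence the second Wu class is $v_2(X)=w_2(X)=(k-1)\bar c_1$, and since $k\equiv 2\bmod 4$ makes $k-1$ odd this is $\bar c_1\ne 0$. Wu's theorem then gives
\[
Sq^2\alpha=\bar c_1\cup\alpha\qquad\text{for every }\alpha\in H^{2n-2}(X;\Z/2).
\]
On the bundle side, via the splitting principle write $\pi^*\xi=\bigoplus_{j=1}^n M_j$ for complex line bundles $M_j$ over an auxiliary space with $\pi^*$ injective mod~$2$, and set $\mu_j=c_1(M_j)\bmod 2$, so that $w_{2r}(\xi)=e_r(\mu_1,\dots,\mu_n)$ and $w_{2r+1}(\xi)=0$. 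Since the Bockstein of a mod~$2$ reduction vanishes, $Sq(\mu_j)=\mu_j+\mu_j^2$, and expanding $Sq\bigl(e_{n-1}(\mu)\bigr)=\sum_j\prod_{l\ne j}\bigl(\mu_l+\mu_l^2\bigr)$ in degree $2n$ and comparing with $e_1(\mu)\,e_{n-1}(\mu)$ — whose "diagonal" terms contribute exactly $n\cdot e_n(\mu)$ — yields
\[
Sq^2\bigl(w_{2n-2}(\xi)\bigr)=w_2(\xi)\,w_{2n-2}(\xi)+n\,w_{2n}(\xi)\qquad\text{in }H^{2n}(X;\Z/2).
\]

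Combining the two displays with $\alpha=w_{2n-2}(\xi)=\bar c_{n-1}(\xi)$ and using that $n$ is odd (which holds for $k\equiv 2\bmod 4$) gives
\[
w_{2n}(\xi)=\bigl(w_2(X)+w_2(\xi)\bigr)\,w_{2n-2}(\xi)=\bigl(\bar c_1+w_2(\xi)\bigr)\,w_{2n-2}(\xi).
\]
Because $\xi$ represents the base point, Proposition~\ref{necessary}(1) together with Lemma~\ref{lem:cohomology} gives $c_n(\xi)=u_{2n}$, so $w_{2n}(\xi)=\overline{u_{2n}}$ is the nonzero element of $H^{2n}(X;\Z/2)=\Z/2$. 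Hence the right-hand side above is nonzero, forcing $\bar c_1+w_2(\xi)\ne 0$; since $H^2(X;\Z/2)=\Z/2\langle\bar c_1\rangle$, this is possible only if $w_2(\xi)=0$, that is, $\xi$ is spin.

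The step I expect to be the crux is pinning down the mod~$2$ coefficient of $w_{2n}(\xi)$ in $Sq^2\bigl(w_{2n-2}(\xi)\bigr)$: this coefficient is the parity of $n$, and it must conspire with the parity of $k-1$ coming from $w_2(X)$ for the argument to close, which is exactly what the hypothesis $k\equiv 2\bmod 4$ provides (note that for $k\equiv 1\bmod 4$ the very same computation instead forces $w_2(\xi)\ne 0$, i.e.\ $\xi$ is \emph{not} spin, consistent with the restriction in the statement). Everything else is bookkeeping with the splitting principle and with the descriptions of $H^*(X)$ and $TX$ already recorded.
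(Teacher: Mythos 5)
Your proof is correct, and it takes a somewhat different route from the paper's at the one computational step that does any work. Both arguments boil down to playing the identity $Sq^2\bigl(\bar c_{n-1}(\xi)\bigr)=\bar c_1(\xi)\bar c_{n-1}(\xi)+n\,\bar c_n(\xi)$ against a computation of $Sq^2$ on $H^{2n-2}(X;\Z/2)$. The paper obtains the latter by applying the same Cartan/Wu formula directly to $u_{2n-2}=\prod_{i\ge 2}c_i$, where the $c_i$ are the Chern classes in the presentation of $H^*(X)$, and then killing the cross terms using the mod-$2$ ring relation $c_i^2=q_i\equiv 0$ from Lemma~\ref{lem:cohomology}; this yields $Sq^2u_{2n-2}=(k-1)u_{2n}$. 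You instead invoke Wu's theorem for the closed manifold $X$, which identifies $Sq^2$ on $H^{2n-2}(X;\Z/2)$ with cup product by the Wu class $v_2=w_2(X)$, and then read off $w_2(X)=(k-1)\bar c_1$ from the tangent bundle description in Lemma~\ref{lem:tangent}. The two routes produce the same coefficient $(k-1)$, but yours makes the role of the manifold transparent and applies uniformly to all of $H^{2n-2}$, rather than being a term-by-term computation specific to $u_{2n-2}$; it also cleanly explains, as you note, why the same argument forces $w_2(\xi)\ne 0$ when $k\equiv 1\bmod 4$, which is a nice sanity check the paper does not spell out. Conversely, the paper's calculation avoids any appeal to Wu's theorem and needs only the ring structure and the Chern--Wu formula, but it does lean on the (true, easily checked) fact that $c_i^2=q_i$ mod $2$, and on $Sq^1\bar c_j=0$ to suppress cross terms, neither of which you need. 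Both are fine; yours is arguably the more conceptual version.
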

\begin{proof}
We show $c_n(\xi)=\pm u_{2n}$ implies
$c_1(\xi)\equiv w_2(\xi)=0 \mod 2$, where $w_2(\xi)$ is the second Stiefel-Whitney class. 
Since $H^*(Sp(k)/U(k))$ has no torsion, $H^*(Sp(k)/U(k);\Z/2\Z)\simeq H^*(Sp(k)/U(k);\Z)\otimes \Z/2\Z$.
We use the same symbol for an integral class and its mod $2$ reduction, and the equations below are meant to hold in
$H^*(Sp(k)/U(k);\Z/2\Z)$.
By Wu's formula, we have $Sq^2(c_i)=c_1c_i + (2i-1)(i-1) c_{i+1}$.
Since $c_i^2\in \Z/2\Z[q_1,q_2,\ldots,q_k]$, 
by Lemma \ref{lem:cohomology} we have 
\[
Sq^2 u_{2n-2} = Sq^2\left(\prod_{i=2}^k c_i\right) = (k-1)\prod_{i=1}^k c_i = u_{2n}.
\]
Set $c_1(\xi)=au_1$ and $c_{n-1}(\xi)=bu_{n-1}$ for some $a,b\in \Z$.
Since $k\equiv 2 \mod 4$, we have $n\equiv 1 \mod 2$.
Again by Wu's formula, we have
\[
bu_{n}= Sq^2(c_{n-1}(\xi)) = c_1(\xi) c_{n-1}(\xi)+c_n(\xi) =(ab+1)u_n.
\]
So $b(a+1)\equiv 1$, and hence, $a \equiv 0 \mod 2$.
\end{proof}

\begin{proof}[Proof of Theorem \ref{main-thm}]
Denote $Sp(k)/U(k)$ by $X$.
\begin{enumerate}
\item Consider the bundle 
\[
 \hat{\xi}=\left(\bigoplus_i L_{x_i} \right) \oplus \left(\bigoplus_{i<j} L_{x_i+x_j}\right)
\]
over $Sp(k)/T$.
Since $\hat{\xi}$ is invariant under the action of $W(U(k))$, there is a bundle $\xi$ over $Sp(k)/U(k)$ which 
pulls back to $\hat{\xi}$ via the projection $Sp(k)/T \to Sp(k)/U(k)$.
Then, $c_n(\xi)=\prod_i x_i \prod_{i<j} (x_i+x_j)=u_{2n}$ is a generator of the top degree cohomology
by Lemma \ref{lem:cohomology}. By Lemma \ref{lem:point}, the base point is represented by $\xi$.
\item
Assume that $\xi'\to X\times X$ represents the diagonal $\Delta(X)$.
By Proposition \ref{necessary} (2), the pullback of $\xi'$ along $\Delta: X\to X\times X$ is
isomorphic to the normal bundle $\nu(\Delta)$, which is isomorphic to $TX$.
On the other hand, the pullback of $\xi'$ along the inclusion to each factor
$i_1,i_2: X\to X\times X$ represents the class of the base point,
where $i_1(x)=(x,pt)$ and $i_2(x)=(pt,x)$.
Since $i_1^*\otimes i_2^*:H^2(X\times X)\simeq H^2(X)\otimes H^2(X)$, we see
\[
c_1(TX)=c_1(\Delta^*(\xi))=
\Delta^*(c_1(\xi))=
c_1(i_1^*(\xi))+c_1(i_2^*(\xi))\equiv 0 \mod 2
\] 
by Lemma \ref{lem:spin}.
However, $c_1(TX)=(k+1)u_1$ by Lemma \ref{lem:tangent} and
it contradicts that $k$ is even.
\end{enumerate}
\end{proof}

\red{
\begin{cor}
Let $G$ be of type $C_k$ and $P$ be of type $A$.
The base point in $G/P$ is representable.
\end{cor}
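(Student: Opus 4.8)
The plan is to deduce this from Theorem~\ref{main-thm}(1) together with Proposition~\ref{bundle}, by realising $G/P$ as a flag bundle of type $A$ over the Lagrangian Grassmannian $Sp(k)/U(k)$ — in the spirit of the proof of Proposition~\ref{prop:point}, but ending the reduction at $Sp(k)/U(k)$ instead of at $\C P^{2k-1}$.

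First I would unwind the hypothesis. For $G$ of type $C_k$, a parabolic $P$ of type $A$ is one whose Levi factor is a product of general linear groups; equivalently, the simple roots defining the Levi avoid the unique long simple root $\alpha_k=2x_k$; equivalently, the associated partial isotropic flag manifold
\[
 G/P=\{0\subset U_{d_1}\subset\cdots\subset U_{d_s}\subset\C^{2k}\mid \dim_\C U_{d_i}=d_i\},\qquad d_1<\cdots<d_s,
\]
has its largest member $U_{d_s}$ Lagrangian, i.e.\ $d_s=k$. In each of these descriptions $P$ is contained in the maximal parabolic $P_A$ with Levi $GL(k)$, and $G/P_A\simeq Sp(k)/U(k)=Lag_\omega(\C^{2k})$, whose base point is representable by Theorem~\ref{main-thm}(1).

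Next I would examine the projection $\pi\colon G/P\to G/P_A$ sending a flag to its largest member $U_{d_s}=U_k$. Its fibre is the variety of partial flags $U_{d_1}\subset\cdots\subset U_{d_{s-1}}$ inside the $k$-dimensional space $U_k$, so $\pi$ is exactly the flag bundle of the tautological rank $k$ Lagrangian subbundle on $Sp(k)/U(k)$, hence a flag bundle of type $A$ with fibre $GL(k)/Q\simeq SL(k)/Q'$ for a suitable parabolic $Q$. (Equivalently one may forget $U_{d_1},\dots,U_{d_{s-1}}$ one at a time, exhibiting $G/P$ as a tower of Grassmann bundles over $Sp(k)/U(k)$, each stage being the bundle of $d_i$-dimensional subspaces of the preceding tautological subbundle.) Applying Proposition~\ref{bundle} (the ``point'' case) to $\pi$ — or to the tower stage by stage — propagates the representability of the base point of $Sp(k)/U(k)$ to that of $G/P$.

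The only step that calls for comment is the identification of $\pi$ with a flag bundle of type $A$ in the precise sense of Proposition~\ref{bundle}: this is the standard dictionary between a projection $G/P\to G/P'$ with $P\subseteq P'$ and the flag bundle of the tautological Levi bundle on $G/P'$, together with the observation that a Grassmann bundle is the flag bundle of a vector bundle. I do not expect any genuine obstacle here, so the statement is a corollary of Theorem~\ref{main-thm}(1) in the same way that Proposition~\ref{prop:point} is a corollary of the representability of a point in $\C P^{2k-1}$.
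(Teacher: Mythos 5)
Your proof is correct and takes essentially the same route as the paper's: note that a type~$A$ parabolic $P$ of $G$ is contained in the maximal parabolic $P_k$ with $G/P_k\simeq Sp(k)/U(k)$, then apply Proposition~\ref{bundle} to the type~$A$ flag bundle $P_k/P\hookrightarrow G/P\to G/P_k$ together with Theorem~\ref{main-thm}(1). The extra detail you give — characterising type~$A$ parabolics in $C_k$ as those avoiding the long simple root, and viewing the projection as a tower of Grassmann bundles — is just an unpacking of the same idea.
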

\begin{proof}
Note that any type $A$ parabolic subgroup $P$ is contained in
the maximal parabolic subgroup $P_k$ of type $A_{k-1}$. 
Apply Proposition \ref{bundle} to the flag bundle $P_k/P \hookrightarrow G/P \to G/P_k$,
where $G/P_k\simeq Sp(k)/U(k)$.
\end{proof}
}

\begin{rem}\label{rem:final}
A result of Totaro \cite{T2} shows
$\tau_{Spin(2k+1)/T}=\tau_{Spin(2k+2)/T}=2^{u(k)}$, where 
$u(k)$ is either $k - \lfloor \log_2( {k+1\choose 2} +1) \rfloor$
or that expression plus one. 
\red{Let $G$ be of type $B_k$ (resp. $D_{k+1}$) so that its compact form is $Spin(2k+1)$ (resp. $Spin(2k+2)$).
Since any parabolic subgroup of $G$ is 
a product of type $B$ (resp. of type $D$) and type $A$ subgroups,
the base point is not representable in $G/P$ for any $P$ when $u(k-1)<u(k)$
by the same argument as in the proof of Theorem \ref{exceptional}.
Note that $u(k-1)=u(k)$ rarely occurs when $k$ gets bigger. A list of $u(k)$ for small $k$ is given in \cite{T2}.
}

For example, let $Q_{l}=\{ x\in \C P^{l+1} \mid x_1^2+\cdots+x_{l+2}^2=0\}$ be the complex quadric.
In {\cite[Theorem 12]{PSP}},
it is shown that the diagonal in $Q_{l}$ is not representable for any odd $l$.
Since $Q_l$ is isomorphic to the real oriented Grassmannian (cf. {\cite[p.280]{KN}})
\[
\widetilde{Gr}_2(\R^{l+2}):=SO(l+2)/SO(2)\times SO(l),
\]
the base point in $Q_l$ is not representable for many $l$.
\red{
For example, $0=u(2)<u(3)=1$ shows that the base point in $Q_5$ is not representable 
as $\tau_{Spin(7)/T}\le \tau_{H/T} \cdot \tau_{Spin(7)/H}$, and hence, $2\le \tau_{Spin(7)/H}=\tau_{Q_5}$, where $H$ 
is the inverse image of $SO(2)\times SO(5)$ under the covering $Spin(7)\to SO(7)$.
Note the low rank equivalences $Q_1=\C P^1, Q_2=\C P^1 \times \C P^1, Q_3=Lag_\omega(\C^4)$, 
$Q_4=Gr_2(\C^4)$ and $Q_6=OG_4(\C^8)$.
So up to $l\le 6$, the base points are representable for $Q_1,Q_2,Q_3,Q_4$ but not for $Q_5$ and $Q_6$.
}

\red{
Theorem \ref{main-thm} tells that the converse to Proposition \ref{bundle} does not hold in general;
even when the diagonal of the total space of the type $A$ flag bundle 
$U(2)/T\to Sp(2)/T \to Sp(2)/U(2)$ is representable,
that of the base space is not representable.
This makes it difficult to complete the study of representability for partial flag manifolds of type $B,C$ and $D$.
The current status of the problem is summarised in the table below. 
The partial information obtained in this note on the entries with the symbol ``$?$''
suggests that a case-by-case analysis may be necessary to settle the remaining cases.
\begin{table}[hbt]
\begin{tabular}{|c|c|c|c|c|c|c|c|c}
\hline
type of $G$ & $A$ & $B$ & $C$ & $D$ & exceptional  \\ \hline 
point for $G/B$ & $\circ$ & $\times$ & $\circ$ & $\times$ & $\times$ \\
point for $G/P$ ($P$ of type $A$) & $\circ$ & $\times$ & $\circ$ & $\times$ & $\times$  \\
point for $G/P$ (otherwise) & $\circ$ & $?$ & $?$ & $?$ & $\times$  \\
diagonal for $G/B$ & $\circ$ & $\times$ & $\circ$ & $\times$  & $\times$\\
diagonal for $G/P$ ($P$ of type $A$) & $\circ$ & $\times$ & $?$ & $\times$ & $\times$ \\ 
diagonal for $G/P$ (otherwise) & $\circ$ & $?$ & $?$ & $?$ & $\times$ \\ \hline
\end{tabular}
\caption{Summary of representability}
\end{table}
}
\end{rem}

\end{document}